\documentclass[11pt]{amsart}

\usepackage[T1]{fontenc}
\usepackage{lmodern}
\usepackage{microtype}
\usepackage[a4paper,margin=28mm]{geometry}
\usepackage{amsmath,amssymb,mathtools}
\usepackage{xcolor}
\usepackage{float}
\usepackage{tikz}
\usetikzlibrary{arrows.meta,calc}
\usepackage[colorlinks=true,linkcolor=blue!55!black,citecolor=blue!55!black,urlcolor=blue!55!black]{hyperref}
\hypersetup{
  pdftitle={Asymptotically optimal bracketing covers for anchored boxes with applications to star discrepancy},
  pdfauthor={Kosuke Suzuki},
  pdfkeywords={bracketing number, delta-cover, star discrepancy, quasi-Monte Carlo method, metric entropy}
}

\newtheorem{theorem}{Theorem}[section]
\newtheorem{proposition}[theorem]{Proposition}
\newtheorem{lemma}[theorem]{Lemma}
\newtheorem{corollary}[theorem]{Corollary}
\theoremstyle{remark}
\newtheorem{remark}[theorem]{Remark}

\title[Asymptotically optimal bracketing covers]{Asymptotically optimal bracketing covers for anchored boxes with applications to star discrepancy}
\author[K.~Suzuki]{Kosuke Suzuki}
\address[K.~Suzuki]{Faculty of Science, Yamagata University, 1-4-12 Kojirakawa-machi, Yamagata, 990\mbox{-}8560, Japan}
\email[]{kosuke-suzuki@sci.kj.yamagata-u.ac.jp}
\thanks{The work of K.S.\ is supported by JSPS KAKENHI Grant Numbers 24K06857 and 26K00620.}
\date{\today}
\subjclass[2020]{Primary 52C17; Secondary 11K38, 65C05}
\keywords{Bracketing number, $\delta$-cover, star discrepancy, quasi-Monte Carlo method, metric entropy}

\begin{document}

\begin{abstract}
Bracketing covers and $\delta$-covers provide finite discretizations of the
anchored boxes that define the star discrepancy. Let $N_{[]}(d,\delta)$ and
$N(d,\delta)$ denote the corresponding bracketing and covering numbers. We
prove the lower bounds
\[
  N_{[]}(d,\delta)\ge \lceil \delta^{-d}\rceil,
  \qquad
  N(d,\delta)\ge
  \left\lceil \frac{d!}{d^d}\,\delta^{-d}\right\rceil.
\]
We give two explicit constructions of bracketing covers. For every fixed
$d$, together with the lower bound they imply
$N_{[]}(d,\delta)=(1+o_d(1))\delta^{-d}$ as $\delta\downarrow0$. A first
construction uses box-dependent anisotropic local grids and gives simple
explicit bounds. A second, homothetic logarithmic-shell construction again
attains this coefficient and gives
$\limsup_{d\to\infty}N_{[]}(d,\delta)^{1/d}\le\delta^{-1}+e+O(\delta)$
as $\delta\downarrow0$. Combining these finite estimates with
Gnewuch's general bracketing bound and a Hoeffding--Bernstein chaining
argument shows that, for every
$d,n\in\mathbb N$, there exists an $n$-point set with star discrepancy at
most $2.3463\sqrt{d/n}$. Consequently,
$\lceil5.5052d\varepsilon^{-2}\rceil$ points suffice for star discrepancy at
most $\varepsilon$.
\end{abstract}

\maketitle

\section{Introduction}
\label{sec:introduction}

\subsection{Star discrepancy, the inverse problem, and finite covers}

For $\boldsymbol{x}=(x_1,\ldots,x_d)\in[0,1]^d$, let
\[
  B_{\boldsymbol{x}}=[\boldsymbol{0},\boldsymbol{x})
  =\prod_{j=1}^d[0,x_j),
  \qquad
  V(\boldsymbol{x})=\prod_{j=1}^d x_j.
\]
For an $n$-point set $P\subset[0,1)^d$, define
\[
  \Delta_P(\boldsymbol{x})
  =\frac1n\sum_{\boldsymbol p\in P}
    \mathbf 1_{B_{\boldsymbol{x}}}(\boldsymbol p)
   -V(\boldsymbol{x}),
  \qquad
  D_n^*(P)=\sup_{\boldsymbol{x}\in[0,1]^d}
  |\Delta_P(\boldsymbol{x})|.
\]
The quantity $D_n^*(P)$ is the star discrepancy of $P$. The
Koksma--Hlawka inequality makes it a central quality criterion for
deterministic quasi-Monte Carlo integration
\cite{DickPillichshammer2010,Niederreiter1992}. For every function $f$ of
bounded Hardy--Krause variation,
\[
  \left|
  \int_{[0,1]^d}f(\boldsymbol{x})\,\mathrm d\boldsymbol{x}
  -\frac1n\sum_{\boldsymbol p\in P}f(\boldsymbol p)
  \right|
  \le V_{\mathrm{HK}}(f)D_n^*(P).
\]
Thus a certified bound for $D_n^*(P)$ gives a deterministic integration-error
bound for this class of integrands.

A complementary question is how small the star discrepancy can be for a
given number of points and dimension. Write
\[
  D^*(n,d):=\inf_{|P|=n}D_n^*(P),
\]
and, for $0<\varepsilon<1$, define
\begin{equation}
  n^*(d,\varepsilon)
  :=
  \min\left\{
    n\in\mathbb N:
    D^*(n,d)\le\varepsilon
  \right\}.
  \label{eq:inverse-star-discrepancy}
\end{equation}
The inverse star-discrepancy problem asks for the joint dependence of
$n^*(d,\varepsilon)$ on the dimension $d$ and the prescribed accuracy $\varepsilon$.
Heinrich, Novak, Wasilkowski, and Wo\'zniakowski proved that
$n^*(d,\varepsilon)\le C d\varepsilon^{-2}$ for a universal constant $C$
\cite{HNWW2001}. Subsequent work made the constant explicit and progressively
smaller through finite covers and chaining
\cite{Aistleitner2011,AistleitnerHofer2014,GnewuchHebbinghaus2021,
GPW2021,Weiss2026}. Dick recently proved that the accuracy exponent $2$
is optimal \cite{DickStar2026}. Together with Hinrichs's lower bound
\cite{Hinrichs2004}, which implies the optimality of the dimension exponent $1$,
this establishes the individual optimality of both exponents.
A bound $D^*(n,d)\le C\sqrt{d/n}$ yields
$n^*(d,\varepsilon)\le\lceil C^2d\varepsilon^{-2}\rceil$,
so improving $C$ gives a quantitative improvement in the inverse problem.

A standard route to such existence bounds is to replace the continuum of
anchored boxes in the definition of $D_n^*(P)$ by a finite family. Bracketing
covers and $\delta$-covers provide precisely such discretizations. If
$\boldsymbol{x}\le\boldsymbol{y}$ coordinatewise, the parameter box
$[\boldsymbol{x},\boldsymbol{y}]$ is a $\delta$-bracket when
\[
  V(\boldsymbol{y})-V(\boldsymbol{x})\le\delta.
\]
A finite family of $\delta$-brackets covering $[0,1]^d$ is a
$\delta$-bracketing cover. Its minimum cardinality is denoted by
$N_{[]}(d,\delta)$.

A finite set $\Gamma\subset[0,1]^d$ is a $\delta$-cover if every
$\boldsymbol z\in[0,1]^d$ admits
$\boldsymbol x,\boldsymbol y\in\Gamma\cup\{\boldsymbol0\}$ such that
$\boldsymbol x\le\boldsymbol z\le\boldsymbol y$ and
$V(\boldsymbol y)-V(\boldsymbol x)\le\delta$. Let $N(d,\delta)$ be the
minimum cardinality of such a set. Monotonicity gives, for every
$\delta$-cover $\Gamma$, the standard estimate
\cite{DGS2005,Gnewuch2008JCo}
\begin{equation}
  D_n^*(P)
  \le
  \max_{\boldsymbol u\in\Gamma\cup\{\boldsymbol0\}}
  |\Delta_P(\boldsymbol u)|+\delta.
  \label{eq:qmc-reduction}
\end{equation}
Hence $N(d,\delta)$ measures the size of a finite discretization of the
defining supremum of the star discrepancy. The endpoints of a
$\delta$-bracketing cover form a $\delta$-cover, so
\[
  N(d,\delta)\le2N_{[]}(d,\delta).
\]
Cover cardinalities enter the union bounds and chaining estimates used
to prove bounds for $D^*(n,d)$ and $n^*(d,\varepsilon)$
\cite{Aistleitner2011,AistleitnerHofer2014,GnewuchHebbinghaus2021,GPW2021}.
Further applications to discrepancy computation and quasi-Monte Carlo
methods are reviewed in Section~\ref{sec:previous}.

\subsection{Main results}

We first establish finite lower bounds for both bracketing and $\delta$-cover
numbers. Combined with two explicit upper-bound constructions, these bounds
determine the sharp fixed-dimensional leading coefficient of
$N_{[]}(d,\delta)$ and give new fixed-dimensional bounds for $N(d,\delta)$.
We then use the resulting dimension-explicit finite estimates to improve the
known constant in the inverse star-discrepancy problem. Throughout,
$O_d(\cdot)$ and $o_d(\cdot)$ refer to the fixed-dimensional limit
$\delta\downarrow0$.

The finite lower bounds underlying both results are as follows.

\begin{theorem}
\label{thm:bracketing-lower}
\label{thm:cover-lower}
For every $d\in\mathbb N$ and $0<\delta\le1$,
\[
  N_{[]}(d,\delta)\ge\left\lceil\delta^{-d}\right\rceil,
  \qquad
  N(d,\delta)
  \ge
  \left\lceil \frac{d!}{d^d}\,\delta^{-d}\right\rceil.
\]
\end{theorem}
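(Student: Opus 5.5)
The plan is a volume argument with respect to a single probability measure on $[0,1]^d$: each $\delta$-bracket, and each piece of a $\delta$-cover attached to one upper point, gets small measure, so many of them are needed. The natural measure is the push-forward of Lebesgue measure under $u_j=z_j^d$. Concretely, let $\mu$ on $[0,1]^d$ have density $d^d V(\boldsymbol z)^{d-1}$, so that
\[
\mu\bigl([\boldsymbol x,\boldsymbol y]\bigr)=\prod_{j=1}^d\bigl(y_j^d-x_j^d\bigr),\qquad \mu([0,1]^d)=1.
\]
The reason for this choice is that $V(\boldsymbol z)^d=\prod_j u_j$ in the new coordinates. Products of coordinates then become geometric means, and AM--GM/Minkowski-type inequalities apply.

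\emph{Bracketing bound.} Let $[\boldsymbol x,\boldsymbol y]$ be a $\delta$-bracket, and set $a_j=y_j^d-x_j^d\ge0$.
\begin{itemize}
\item Superadditivity of the geometric mean (Minkowski's inequality for $\bigl(\prod_j(\cdot)\bigr)^{1/d}$ on nonnegative vectors) gives
\[
V(\boldsymbol y)=\Bigl(\prod_j(x_j^d+a_j)\Bigr)^{1/d}\ge\Bigl(\prod_j x_j^d\Bigr)^{1/d}+\Bigl(\prod_j a_j\Bigr)^{1/d}=V(\boldsymbol x)+\mu([\boldsymbol x,\boldsymbol y])^{1/d}.
\]
\item Hence $\mu([\boldsymbol x,\boldsymbol y])\le(V(\boldsymbol y)-V(\boldsymbol x))^d\le\delta^d$.
\item The brackets cover $[0,1]^d$, which has $\mu$-measure $1$, so there are at least $\delta^{-d}$ of them. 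Integrality gives the ceiling.
\end{itemize}

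\emph{Cover bound.} Let $\Gamma$ be a $\delta$-cover with $|\Gamma|=m$. Counting pairs $(\boldsymbol x,\boldsymbol y)$ would lose a square, so I would index only by the upper point.
\begin{itemize}
\item For $\boldsymbol y\in\Gamma$, let $A_{\boldsymbol y}$ be the set of $\boldsymbol z$ for which $\boldsymbol y$ serves as the upper point. If $\boldsymbol z\in A_{\boldsymbol y}$, then $\boldsymbol z\le\boldsymbol y$, and also $V(\boldsymbol z)\ge V(\boldsymbol x)\ge V(\boldsymbol y)-\delta$ for the corresponding lower point $\boldsymbol x$.
\item The upper point $\boldsymbol 0$ only serves $\boldsymbol z=\boldsymbol 0$, a null set. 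So $[0,1]^d$ is covered, up to a null set, by the $m$ sets
\[
A_{\boldsymbol y}\subseteq\{\boldsymbol z\le\boldsymbol y:\ V(\boldsymbol z)\ge V(\boldsymbol y)-\delta\}.
\]
\item In $u$-coordinates, substitute $u_j=y_j^d w_j$ with $w\in[0,1]^d$. This gives
\[
\mu(A_{\boldsymbol y})\le V(\boldsymbol y)^d\,\mathrm{vol}\Bigl\{w\in[0,1]^d:\ \bigl(\prod_j w_j\bigr)^{1/d}\ge t\Bigr\},\qquad t=1-\delta/V(\boldsymbol y).
\]
\item By AM--GM, $t\le\bigl(\prod_j w_j\bigr)^{1/d}\le\frac1d\sum_j w_j$. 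So the region lies in the corner simplex $\{\sum_j(1-w_j)\le d(1-t)\}$, whose volume is at most $(d(1-t))^d/d!$.
\item Therefore $\mu(A_{\boldsymbol y})\le V(\boldsymbol y)^d\,(d\delta/V(\boldsymbol y))^d/d!=(d\delta)^d/d!$.
\item If $V(\boldsymbol y)\le\delta$ (so $t\le 0$), use the trivial bound $\mu(A_{\boldsymbol y})\le\mu\{\boldsymbol z\le\boldsymbol y\}=V(\boldsymbol y)^d\le\delta^d\le(d\delta)^d/d!$, since $d^d\ge d!$.
\item Summing over $\boldsymbol y\in\Gamma$ gives $m\,(d\delta)^d/d!\ge1$, which is the claimed bound after taking the ceiling.
\end{itemize}

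The main obstacle is finding a measure under which bracket mass scales like $\delta^d$ rather than $\delta$. Plain Lebesgue measure only gives $\mathrm{vol}[\boldsymbol x,\boldsymbol y]\le V(\boldsymbol y)-V(\boldsymbol x)$, hence only $\delta^{-1}$. The choice of $\mu$ fixes this, and the rest is routine verification. In the cover part, the step to check carefully is that the simplex estimate gives exactly $d^d/d!$ uniformly in $V(\boldsymbol y)$, including the degenerate case $V(\boldsymbol y)\le\delta$.
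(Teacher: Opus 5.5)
Your proposal is correct and follows the paper's proof: the same measure $\pi_d$ with density $d^dV(\boldsymbol z)^{d-1}$, the same covering by sets indexed only by the upper point, the same substitution $w_j=(z_j/y_j)^d$ with the AM--GM simplex estimate, and the same separate treatment of $V(\boldsymbol y)\le\delta$. The only difference is cosmetic: you state the bracket estimate as superadditivity of the geometric mean, and that inequality is exactly what the paper's two applications of AM--GM prove after normalizing by $r_j=x_j/y_j$.
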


For every fixed $d\ge3$, the previously known bracketing bounds
\cite{Gnewuch2008JCo,Gnewuch2024} gave
\[
  1
  \le
  \liminf_{\delta\downarrow0}\delta^dN_{[]}(d,\delta)
  \le
  \limsup_{\delta\downarrow0}\delta^dN_{[]}(d,\delta)
  \le
  \frac{d^d}{d!}.
\]
In dimension two, coefficient one had already been attained from above
\cite{Gnewuch2008EJC}. Thus the optimal first-order coefficient remained
unknown in every fixed dimension $d\ge3$. We close this gap.

\begin{theorem}
\label{thm:main}
For every fixed $d\in\mathbb N$,
$N_{[]}(d,\delta)=(1+o_d(1))\delta^{-d}$ as $\delta\downarrow0$.
Equivalently,
\[
  \lim_{\delta\downarrow0}\delta^dN_{[]}(d,\delta)=1.
\]
\end{theorem}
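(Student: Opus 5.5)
By Theorem~\ref{thm:bracketing-lower} we already have $\delta^dN_{[]}(d,\delta)\ge\delta^d\lceil\delta^{-d}\rceil\ge1$, so $\liminf_{\delta\downarrow0}\delta^dN_{[]}(d,\delta)\ge1$. The whole task is the matching upper bound: for each fixed $d$ and each $\eta>0$ we must exhibit, for all sufficiently small $\delta$, a $\delta$-bracketing cover with at most $(1+\eta)\delta^{-d}$ brackets. The plan is to extract from the lower-bound argument a quantity that every $\delta$-bracket can contribute at most $\delta^d$ of, with total mass one. We then build brackets that each contribute $(1-o(1))\delta^d$ of that quantity, except for a negligible number of exceptional brackets.

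\emph{Step 1: identify the extremal brackets.} I would first analyse when a single $\delta$-bracket $[\boldsymbol x,\boldsymbol y]$ is nearly extremal for the lower bound. Write $y_j=x_j+h_j$ and expand
\[
V(\boldsymbol y)-V(\boldsymbol x)=\sum_{j=1}^d h_j\prod_{i<j}x_i\prod_{i>j}y_i,
\]
a telescoping sum with all terms nonnegative. Near-equality should force the $d$ telescoping terms to be equal, each about $\delta/d$. This means the side lengths $h_j$ are anisotropic and depend on the box, not a uniform grid in logarithmic coordinates. That is why isotropic local grids only give the coefficient $d^d/d!$.

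\emph{Step 2: cover a bulk region by near-extremal brackets.} I would split $[0,1]^d$ into a bulk region $R_\kappa=\{\boldsymbol z:\ V(\boldsymbol z)\ge\kappa,\ \min_j z_j\ge\kappa\}$ with a small fixed $\kappa>0$, and its complement. On $R_\kappa$, partition into macro-cells that are small in the $\delta$-independent sense, so that $\boldsymbol z\mapsto V(\boldsymbol z)$ and its partial derivatives vary by at most a factor $1+\eta$ across each cell. Inside each macro-cell, lay a product grid whose step in coordinate $j$ is chosen from the frozen values of $\partial_jV$ so that every grid box is a $\delta$-bracket by the telescoping identity. The step sizes are then enlarged by the factor $1-O(\eta)$ allowed by the slack, so that each box is near-extremal in the sense of Step~1. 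Summing over macro-cells and comparing with the lower-bound mass shows that $R_\kappa$ is covered by at most $(1+O(\eta))\delta^{-d}$ brackets, plus boundary effects of order $O_{d,\kappa}(\delta^{-d+1})$ that come from rounding grids at macro-cell faces.

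\emph{Step 3: the exceptional region.} The complement of $R_\kappa$ has to be covered with $o_d(\delta^{-d})$, or at least $O(\kappa)\delta^{-d}$, brackets. Points with $V(\boldsymbol z)\le\delta$ are handled cheaply, for instance by a single bracket $[\boldsymbol0,\cdot\,]$ per coarse region. Points with some small coordinate I would treat recursively, using any known bound of order $C_d\delta^{-d}$ (Gnewuch's bound with coefficient $d^d/d!$) restricted to a thin slab, whose share is $O_d(\kappa)\delta^{-d}$. Letting $\delta\downarrow0$ first and then $\kappa,\eta\downarrow0$ gives $\limsup_{\delta\downarrow0}\delta^dN_{[]}(d,\delta)\le1$.

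\emph{Main obstacle.} The hardest part is Step~2, specifically matching the construction to exactly the mass functional in the proof of Theorem~\ref{thm:bracketing-lower}. If that functional is not plain Lebesgue measure (and near $\boldsymbol 1$ plain Lebesgue measure cannot be the right one, since boxes there have $V(\boldsymbol y)-V(\boldsymbol x)\approx\sum_j h_j$), the local anisotropic grid must be tuned to the correct weight. I would first try to read the weight off from a change of variables in which the $d$ telescoping increments become the coordinates. In those variables a $\delta$-bracket is essentially a cube of side $\delta/d$, and a near-extremal tiling is just a fine uniform grid.
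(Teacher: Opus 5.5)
Your Step~2 is essentially the paper's first construction (Section~\ref{sec:anisotropic-construction}): coarse cells carrying box-dependent product grids with step about $\delta/(d\,\partial_jV)$ in coordinate $j$, validated by the telescoping identity. The gap is where you locate it. You assert the count $(1+O(\eta))\delta^{-d}$ by ``comparing with the lower-bound mass'', a functional you have not identified, so coefficient one is never established.

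No tuning to an unknown weight is needed; you only have to count. Freeze at the upper corner $\boldsymbol q$ of a cell of side $s$, where $\partial_jV$ attains its maximum $\prod_{i\ne j}q_i$ over the cell. Cut side $j$ into $m_j=\lceil s\,d\,\partial_jV(\boldsymbol q)/\delta\rceil$ pieces. Every microbox is then a $\delta$-bracket however much $\partial_jV$ varies over the cell, so no $1\pm O(\eta)$ adjustment is needed. Since $\prod_j\partial_jV=V^{d-1}$,
\[
  \prod_{j=1}^d m_j\le\prod_{j=1}^d\Bigl(1+\frac{s\,d\,\partial_jV(\boldsymbol q)}{\delta}\Bigr)
  =\frac{s^d d^d V(\boldsymbol q)^{d-1}}{\delta^{d}}+O_d\Bigl(\Bigl(\frac{s}{\delta}\Bigr)^{d-1}\Bigr).
\]
Summed over the $s^{-d}$ cells, the main term is an upper Riemann sum for $\delta^{-d}\int_{[0,1]^d}d^dV(\boldsymbol z)^{d-1}\,\mathrm d\boldsymbol z=\delta^{-d}$, because $\int_0^1 d\,t^{d-1}\,\mathrm dt=1$. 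The remainder is $O_d(s^{-1}\delta^{1-d})$. Letting $\delta\downarrow0$ and then $s\downarrow0$ gives $\limsup_{\delta\downarrow0}\delta^dN_{[]}(d,\delta)\le1$. The weight $d^dV^{d-1}$ is the density of $\pi_d$ from the lower bound, but the upper bound never uses this. The paper runs the same count with $s=\ell\asymp\sqrt\delta$, via H\"older's inequality \eqref{eq:grid-counting} and Lemma~\ref{lem:right-sum}, and obtains the rate $1+O_d(\sqrt\delta)$.

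Step~3 also has holes for $d\ge2$.
\begin{itemize}
\item Points of $(\kappa,1]^d$ with $\delta<V<\kappa$, for instance $(2\kappa,1/4,1,\ldots,1)$, are covered by neither of your mechanisms.
\item Finitely many anchored $\delta$-brackets cannot cover $\{V\le\delta\}$. A point with $V(\boldsymbol z)=\delta$ and all $z_j>0$ lies in $[\boldsymbol0,\boldsymbol y]$ with $V(\boldsymbol y)\le\delta$ only if $\boldsymbol y=\boldsymbol z$.
\item A global cover ``restricted to a slab'' gives no count of the brackets meeting the slab.
\end{itemize}
These are repairable. Drop $V\ge\kappa$ from $R_\kappa$. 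On $[\kappa,1]^d$, $\partial_jV$ varies by at most a factor $(1+s/\kappa)^{d-1}$ per cell, and $\{V\le\delta\}$ lies in the slabs once $\delta<\kappa^d$. Then cover $\{z_j\le\kappa\}$ by the image of a $(\delta/\kappa)$-bracketing cover under $z_j\mapsto\kappa z_j$, which preserves order and multiplies $V$ by $\kappa$. By \eqref{eq:gpw2021} this costs $O_d(\kappa^d\delta^{-d})$ brackets.

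With upper-corner freezing, however, the count above already holds on all of $[0,1]^d$, so the cutoff $\kappa$ and the external bound can be dropped entirely. The paper peels off only $\{\min_jz_j\le\delta\}$, using the $d$ brackets $[\boldsymbol0,\boldsymbol b^{(j)}]$.

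Incidentally, equal telescoping terms mean $h_j/z_j$ is independent of $j$. So your near-extremal brackets are homothetic, i.e.\ cubes in $\xi_j=-d\log z_j$. Section~\ref{sec:homothetic-shells} shows that uniform logarithmic grids, taken shell by shell, also attain coefficient one.
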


For $d\ge2$, the sharp leading coefficient of the $\delta$-cover number
remains unknown.
Classical lower and upper bounds are given in
\cite{DGS2005,Gnewuch2008JCo}. The lower bound in
Theorem~\ref{thm:cover-lower} has the correct order, while the same
construction used for Theorem~\ref{thm:main} gives upper coefficient one.
Thus we obtain the following fixed-dimensional bounds.

\begin{theorem}
\label{thm:cover-main}
For every fixed $d\in\mathbb N$,
\[
  \frac{d!}{d^d}
  \le
  \liminf_{\delta\downarrow0}\delta^dN(d,\delta)
  \le
  \limsup_{\delta\downarrow0}\delta^dN(d,\delta)
  \le1.
\]
\end{theorem}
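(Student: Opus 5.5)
The lower bound in Theorem~\ref{thm:cover-main} is immediate from Theorem~\ref{thm:cover-lower}. Since $N(d,\delta)\ge \frac{d!}{d^d}\delta^{-d}$ for every $0<\delta\le1$, multiplying by $\delta^d$ and taking $\liminf_{\delta\downarrow0}$ gives $\liminf_{\delta\downarrow0}\delta^dN(d,\delta)\ge d!/d^d$. The middle inequality is trivial. So all the work lies in the upper bound $\limsup\delta^dN(d,\delta)\le1$.

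The naive route fails. The inequality $N(d,\delta)\le 2N_{[]}(d,\delta)$ together with Theorem~\ref{thm:main} only gives the coefficient $2$. To reach coefficient one, I would revisit the explicit bracketing construction behind Theorem~\ref{thm:main} and exploit the sharing of endpoints. The plan is to choose the brackets of that construction so that their endpoint sets coincide up to lower-order terms. Concretely, I expect each bracket to have the form $[\boldsymbol a,\boldsymbol b]$ with $\boldsymbol a,\boldsymbol b$ lying on a common local grid: the anisotropic local grid in a region of $[0,1]^d$, or a logarithmic shell in the homothetic construction. On such a grid the lower corner of one bracket is the upper corner of a neighbouring bracket. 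Then the set
\[
  \Gamma=\{\boldsymbol b:[\boldsymbol a,\boldsymbol b]\text{ a bracket of the cover}\}
  \cup\{\boldsymbol a:[\boldsymbol a,\boldsymbol b]\text{ a bracket of the cover}\}
\]
has cardinality equal to the number of grid points rather than twice the number of brackets. The number of grid points is the number of brackets plus a boundary contribution.

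The key steps are as follows.
\begin{enumerate}
\item Recall the structure of the construction giving $N_{[]}(d,\delta)\le(1+o_d(1))\delta^{-d}$. The cover is partitioned into $o_d(\delta^{-d})$ blocks, and within each block the brackets are the cells of a product grid.
\item Observe that in a product grid with $m_1\times\cdots\times m_d$ cells, the vertex set has $\prod_j(m_j+1)$ points. This equals $(1+o(1))\prod_j m_j$ whenever every $m_j\to\infty$.
\item Sum over blocks. The extra vertices contributed by the block boundaries, together with any cells in blocks where some $m_j$ stays bounded, must total $o_d(\delta^{-d})$. Since $\Gamma$ is built from the bracket endpoints, it is a $\delta$-cover, with $\boldsymbol0$ available for the brackets touching the coordinate hyperplanes.
\end{enumerate}

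The main obstacle is step 3: controlling the blocks near the coordinate faces or near $\boldsymbol 0$. There the local grids may be thin in some direction, so the vertex-to-cell ratio need not tend to one.

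I would handle these regions in the same way the bracketing construction presumably makes them negligible. Points $\boldsymbol z$ with $V(\boldsymbol z)\le\delta$ are covered using $\boldsymbol x=\boldsymbol0$ and a few upper points. The total number of brackets in boundary regions of small volume is already $o_d(\delta^{-d})$, so even counting two endpoints per such bracket costs only $o_d(\delta^{-d})$.

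If the construction's grids are not aligned enough for lower corners to coincide with upper corners, the fallback is different. I would reparametrize each block so that its cells are exactly the cells of a product grid. This may cost a factor $(1+\eta)$ in the number of brackets, which is acceptable after letting $\eta\downarrow0$ with $\delta$. The number of distinct endpoints is then again $(1+o_d(1))\delta^{-d}$, giving $\limsup_{\delta\downarrow0}\delta^dN(d,\delta)\le1$.
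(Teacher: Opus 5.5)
Your proposal is correct and follows the paper's route. The lower bound is read off Theorem~\ref{thm:cover-lower}, and the upper bound takes all vertices of the local microgrids of the anisotropic construction (Proposition~\ref{prop:cover-count}), so that shared vertices remove the factor $2$.

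The step you flag as the main obstacle is handled globally in the paper, not block by block. The H\"older counting inequality \eqref{eq:grid-counting} with $b=2$ bounds the vertex count by $(2K+d\ell T_{d-1}/\delta)^d$. Since $K\delta=O_d(\sqrt\delta)$ for the chosen $K$, the extra $+1$ per direction costs only a factor $1+O_d(\sqrt\delta)$, thin coarse boxes included. So you need neither a separate treatment of small-volume regions nor any reparametrization.
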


The proofs of Theorems~\ref{thm:main} and~\ref{thm:cover-main} are completed
in Section~\ref{sec:explicit}. For $d\ge2$, they follow immediately from
Theorem~\ref{thm:bracketing-lower} and Corollary~\ref{cor:closed-bounds},
while the case $d=1$ is elementary.

The second construction also improves the upper bound on exponential growth
in the dimension. For every fixed $0<\delta<1$,
Theorem~\ref{thm:bracketing-lower} and Corollary~\ref{cor:hs-upper-base} give
\[
  \delta^{-1}
  \le \liminf_{d\to\infty}N_{[]}(d,\delta)^{1/d}
  \le \limsup_{d\to\infty}N_{[]}(d,\delta)^{1/d}
  \le F(A_\delta),
\]
where $F$ and $A_\delta$ are defined in \eqref{eq:hs-A-F}. As
$\delta\downarrow0$, we have $F(A_\delta)=\delta^{-1}+e+O(\delta)$.
The ratio of this upper base to that of Gnewuch's refined bound
\eqref{eq:hs-gnewuch-refined} tends to $1/e$, with $d\to\infty$ taken first.

For the inverse star-discrepancy problem, Wei\ss{} \cite{Weiss2026}
proved the previously best explicit bounds
\[
  D^*(n,d)\le2.4631832\sqrt{\frac dn},
  \qquad
  n^*(d,\varepsilon)\le6.0672715\,d\varepsilon^{-2}.
\]
To improve these bounds, we derive explicit covering estimates in both
$d$ and $\delta$ from two complementary constructions and use them to prove
the following result.

\begin{theorem}
\label{thm:discrepancy-23463}
For every $d,n\in\mathbb N$,
\begin{equation}
  D^*(n,d)\le2.3463\sqrt{\frac dn}.
  \label{eq:discrepancy-23463}
\end{equation}
Consequently, for every $d\in\mathbb N$ and $0<\varepsilon<1$,
\begin{equation}
  n^*(d,\varepsilon)
  \le
  \left\lceil5.5052\,d\varepsilon^{-2}\right\rceil.
  \label{eq:inverse-55052}
\end{equation}
\end{theorem}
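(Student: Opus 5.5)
The proof follows the standard chaining route of Aistleitner and of Gnewuch--Hebbinghaus--Pasing--Wei\ss{}, with our sharper bracketing estimates substituted at every level.

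\textbf{Setup.} Fix $d,n$ and set $\varepsilon_0:=2.3463\sqrt{d/n}$. If $\varepsilon_0\ge1$ there is nothing to prove, so we assume $d<n/2.3463^2$. We choose a decreasing sequence of scales $\delta_k=2^{-k}$, $k=k_0,\dots,K$. The finest level $\delta_K$ is of order $\sqrt{d/n}$ times a small constant, so that its contribution enters \eqref{eq:qmc-reduction} additively. At each level we take a $\delta_k$-bracketing cover $\mathcal C_k$ with $|\mathcal C_k|\le N_{[]}(d,\delta_k)$. The cardinalities are controlled by the explicit finite bounds of Corollary~\ref{cor:closed-bounds} and Corollary~\ref{cor:hs-upper-base}, using whichever is better in the relevant regime of $(d,\delta)$. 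For the coarsest levels, where $\delta$ is not small, we instead use Gnewuch's general bracketing bound $N_{[]}(d,\delta)\le 2^{d-1}(2\pi d)^{-1/2}e^d(\delta^{-1}+1)^d$.

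\textbf{Chaining step.} Each anchored box $B_{\boldsymbol x}$ is approximated along a chain of nested differences between consecutive levels. Every link is a union of boxes of volume at most $\delta_{k-1}$. We draw $P$ as $n$ i.i.d.\ uniform points, or via the stratified/jittered variant if it gives better constants. At level $k$ we apply Bernstein's inequality to the increments, whose variance is at most $\delta_{k-1}$. At the coarsest level we apply Hoeffding's inequality, since the variance there is of order one. The allotted deviations are $t_k$, chosen so that
\[
  \sum_k \binom{\text{level-}k}{\text{count}}\,2\exp\Bigl(-\frac{n t_k^2}{2\delta_{k-1}+\tfrac23 t_k}\Bigr)<1.
\]
Here the level-$k$ count is at most $|\mathcal C_{k-1}|\,|\mathcal C_k|$, or $|\mathcal C_k|$ if the chain is constructed by refinement. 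Since $\log N_{[]}(d,\delta)\le d\log(c/\delta)$, with constant $c$ close to $1+e\delta$ by the homothetic shell construction, each $t_k$ scales like $\sqrt{d\,\delta_{k-1}\log(c/\delta_k)/n}$. These terms sum to a constant multiple of $\sqrt{d/n}$. Adding the final bracket error $\delta_K$ through monotonicity gives $D_n^*(P)\le C\sqrt{d/n}$, where $C$ is an explicit optimization problem over the scale sequence and the $t_k$.

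\textbf{Numerical optimization and the small-$n$ cases.} We then optimize numerically to get $C\le2.3463$, with all estimates kept non-asymptotic in $d$. This step is the main obstacle. The improvement over Wei\ss's $2.4632$ comes entirely from the cover numbers, since the base drops from roughly $e\delta^{-1}$ to $\delta^{-1}+e$. So the constants must be tracked carefully, especially in the Bernstein linear term and in the low-dimensional cases $d=1,2,3$, where the asymptotic bases are inaccurate and exact finite bounds are needed. The range of $n$ where the chaining bound is not yet good enough is handled in three ways. For $d=1$ we use the explicit value $D^*(n,1)=1/(2n)$. For small $n/d$, the trivial bound $D^*\le1$ already suffices. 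For the remaining intermediate parameters we verify finitely many cases directly.

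\textbf{Consequence.} The bound \eqref{eq:inverse-55052} follows by taking $n=\lceil 2.3463^2 d\varepsilon^{-2}\rceil$, since $2.3463^2\le5.5052$ and $D^*(n,d)$ is then at most $\varepsilon$.
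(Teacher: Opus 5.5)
Your outline has the right skeleton: one Hoeffding level, Bernstein on dyadic refinement increments, and a terminal $\delta$-cover error. As written, though, it cannot reach $2.3463$, for two concrete reasons.

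\textbf{The cover bounds are used at the wrong levels.} The Hoeffding term dominates the final constant. Its trade-off with the Bernstein tail (one grows like $\sqrt{\mu\log2/2}$, the other decays like $2^{-\mu/2}\sqrt\mu$) forces the Hoeffding level down to a small scale. In the paper this scale is $\delta=2^{-12}$, where $\sqrt{\frac1{2d}\log(2N_{[]}(d,2^{-12})/0.9)}<\sqrt{4.264}\approx2.065$ makes up most of the final $\sqrt{4.264}+0.2735+2^{-7}<2.3463$.

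That is exactly where the new bounds are indispensable: the anisotropic bound \eqref{eq:exponential-bracketing} for $5\le d\le64$, and the finite shell bound of Corollary~\ref{cor:hs-simple-finite} with $J\le17d$ for $d\ge65$. Corollary~\ref{cor:hs-upper-base} is only a $\limsup$ and gives no uniform constant. Putting Gnewuch's 2008 bound $2^{d-1}(2\pi d)^{-1/2}e^d(\delta^{-1}+1)^d$ at the Hoeffding level adds up to $1+\log2$ to $\frac1d\log N_{[]}$. The total is then still about $2.5$ even after optimizing the coarse scale.

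At the fine levels the situation is reversed. The terms carry a factor $2^{-h/2}$, and the paper simply uses Gnewuch's 2024 bound \eqref{eq:gnewuch2024} to obtain an affine majorant $\Lambda_h$. The margin is tiny, so you must also commit to the refinement chain with one cover count per level. The product count $|\mathcal C_{k-1}||\mathcal C_k|$ roughly doubles each logarithm and multiplies the Bernstein tail by $\sqrt2$, giving about $2.46$.

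\textbf{The leftover ranges are not finite.} In low dimension the overheads $\frac1d\log(2/w_{\mathcal B})$ and $\frac1d\log(2/w_h)$ do not become small, and the random chaining constant does not improve as $n\to\infty$. For $d=2$, even if $N_{[]}(2,\delta)$ equalled its lower bound $\delta^{-2}$, the Hoeffding term at $2^{-12}$ would be about $\sqrt{(24\log2+\log(2/0.9))/4}\approx2.09$. The total then stays near $2.37$ for all large $n$, and $d=3$ is borderline. No finite check can handle this range.

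The missing ingredient is deterministic. The paper uses Hammersley point sets with explicit $O((\log n)^{d-1}/n)$ discrepancy bounds:
\begin{itemize}
\item base $2$ for $d=2$ and all $n\ge2$;
\item bases $2,3$ for $d=3$ and $n\ge256$;
\item bases $2,3,5$, via Atanassov's estimate, for $d=4$ and $n\ge2^{17}$.
\end{itemize}
Below these thresholds it uses the single-scale random bound \eqref{eq:discrepancy-single-scale}.

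Likewise, for $d\ge5$ the range between the trivial bound $D^*\le1$ and $n\ge2^{21}d$, where the paper runs the multiscale chain, is infinite as $d$ varies. The paper covers it uniformly in $d$ with the single-scale estimate at $\delta=2^{-14}$. This again needs the new cover bounds, in the form $\frac1{2d}\log(4N_{[]}(d,2^{-14}))<5.004$ for all $d\ge5$.

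Finally, you defer the entire numerical verification. Without explicit parameters, the proposal does not establish any constant below Wei\ss{}'s.
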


\subsection{Proof ideas and constructions}

We use the polynomial product measure of
Gnewuch, Wahlstr\"om, and Winzen \cite{GnewuchWahlstromWinzen2012}
\begin{equation}
  \mathrm d\pi_d(\boldsymbol z)
  =d^dV(\boldsymbol z)^{d-1}\,\mathrm d\boldsymbol z.
  \label{eq:intro-pi}
\end{equation}
We prove that every parameter bracket $[\boldsymbol x,\boldsymbol y]$
satisfies the sharp mass bound
\[
  \pi_d([\boldsymbol x,\boldsymbol y])
  \le
  \bigl(V(\boldsymbol y)-V(\boldsymbol x)\bigr)^d.
\]
Thus every $\delta$-bracket has $\pi_d$-measure at most $\delta^d$, which
gives the bracketing lower bound by subadditivity. A related simplex estimate
for the same measure gives the $\delta$-cover lower bound. For nondegenerate
brackets, equality holds precisely in the homothetic case. This geometry also
guides both upper-bound constructions. For the relation of $\pi_d$ and the
homothetic geometry to previous work, see Remarks~\ref{rem:measure-context}
and~\ref{rem:lower-equality}.

Our first construction uses locally adapted rectangular grids. Its leading
local counting density, after multiplication by $\delta^d$, is
$d^dV(\boldsymbol z)^{d-1}$, the density of $\pi_d$.
It therefore attains the sharp fixed-dimensional leading coefficient.
Shared vertices give upper coefficient one for the $\delta$-cover number,
and the grid counts yield simple finite bounds.

The second construction uses homothetic brackets. Under
$\xi_j=-d\log z_j$, these brackets become cubes and volume level sets become
hyperplanes. A thin-shell decomposition reduces the counting to integer
compositions and gives better control of the exponential growth in $d$
for fixed $\delta$. The same construction also attains the sharp
fixed-dimensional leading coefficient. In Section~\ref{sec:discrepancy},
we use the grid bounds in smaller dimensions and the shell bounds in larger
dimensions to estimate the coarse levels of the chaining argument.

The remainder of the paper is organized as follows.
Section~\ref{sec:previous} reviews previous bounds and discrepancy
applications. The lower bounds are proved in Section~\ref{sec:lower-bounds},
and the anisotropic construction, including its explicit bounds, is developed
in Section~\ref{sec:anisotropic-construction}.
Section~\ref{sec:homothetic-shells} develops the complementary homothetic
logarithmic-shell construction in both high-dimensional and fixed-dimensional
regimes. Finally, Section~\ref{sec:discrepancy} combines the two families of
bounds in the proof of Theorem~\ref{thm:discrepancy-23463}.

\section{Previous bounds}
\label{sec:previous}

For a broader survey of bracketing entropy, randomization,
derandomization, and their applications to discrepancy, see
\cite{Gnewuch2012Survey}.

\subsection{Bracketing numbers}
\label{sec:previous-bounds}

Gnewuch established general lower and upper bounds for the bracketing
numbers of anchored and unanchored axis-parallel boxes
\cite{Gnewuch2008JCo}. In particular, for every fixed $d\ge2$,
\[
  N_{[]}(d,\delta)
  \ge \delta^{-d}\bigl(1-O_d(\delta)\bigr)
  \qquad (\delta\downarrow0).
\]
Theorem~\ref{thm:bracketing-lower} strengthens this asymptotic estimate to
the exact finite bound
$N_{[]}(d,\delta)\ge\lceil\delta^{-d}\rceil$, valid for every
$d\in\mathbb N$ and $0<\delta\le1$. In dimension one,
$N_{[]}(1,\delta)=\lceil\delta^{-1}\rceil$.

The coefficient one was also attained from above in dimension two. Gnewuch
\cite{Gnewuch2008EJC} constructed re-oriented covers satisfying
\[
  N_{[]}(2,\delta)\le\delta^{-2}+o(\delta^{-2}).
\]
Gnewuch, Pasing, and Wei\ss{} used a generalized Faulhaber inequality to
prove the estimate
\begin{equation}
  N_{[]}(d,\delta)
  \le
  b_d\frac{d^d}{d!}(\delta^{-1}+1)^d,
  \qquad
  b_d=\max\{1,1.1^{d-101}\},
  \label{eq:gpw2021}
\end{equation}
for every $d\in\mathbb N$ and $0<\delta\le1$; see
\cite[Theorem~2.5]{GPW2021}. This dimension-explicit estimate was used in
their discrepancy applications.

Thi\'emard introduced a recursive bracketing construction in connection with
certified star-discrepancy computation \cite{Thiemard2001}. Gnewuch later
gave a refined analysis of this construction and obtained, for $d\ge3$,
\begin{equation}
  N_{[]}(d,\delta)
  \le
  \frac{d^d}{d!}\,\delta^{-d},
  \qquad 0<\delta<1.
  \label{eq:gnewuch2024}
\end{equation}
See \cite{Gnewuch2024}. This improves \eqref{eq:gpw2021} for $d\ge3$.

\subsection{\texorpdfstring{$\delta$}{delta}-cover numbers and approximation of star discrepancy}
\label{sec:previous-cover-bounds}

Doerr, Gnewuch, and Srivastav proved a lower bound of the correct order for
$N(d,\delta)$. More precisely, their Theorem~2.8 gives, for $d\ge2$ and
$0<\delta\le1/d$,
\begin{equation}
  N(d,\delta)\ge
  \frac25\frac{d!}{d^d}\,\delta^{-d}
  -\frac25d!\sum_{k=0}^{d-1}
  \frac{d^k\lvert\log(d\delta)\rvert^k}{k!}.
  \label{eq:dgs-cover-lower}
\end{equation}
See \cite[Theorem~2.8]{DGS2005}. Thus their asymptotic leading coefficient is
$\frac25d!/d^d$. Theorem~\ref{thm:cover-lower} improves this coefficient by
a factor $5/2$, removes the negative logarithmic correction, and extends
the bound to all $d\in\mathbb N$ and $0<\delta\le1$.

Equation~\eqref{eq:qmc-reduction} gives a certified additive approximation of
the star discrepancy from its values on a $\delta$-cover. This is the
application underlying Thi\'emard's algorithm \cite{Thiemard2001}.
The discretization
inequality also appears in \cite[Lemma~3.1]{DGS2005} and
\cite[(19)]{Gnewuch2008JCo}. The cardinality $N(d,\delta)$ measures the number
of test boxes, whereas the arithmetic cost of evaluating them also depends
on the range-counting procedure. Covers are also used in deterministic
constructions of low-discrepancy point sets; the dependent-randomized-rounding
algorithm of Doerr, Gnewuch, and Wahlstr\"om
\cite{DoerrGnewuchWahlstrom2010} improves the earlier derandomization approach
of \cite{DGS2005} and includes numerical experiments in dimensions up to 21.
Another application appears in Markov chain quasi-Monte Carlo, where Dick,
Rudolf, and Zhu use covering arguments in discrepancy estimates for uniformly
ergodic Markov chains \cite{DickRudolfZhu2016}.

\section{Lower bounds}
\label{sec:lower-bounds}

\subsection{Bracketing numbers}
\label{sec:lower}

\begin{proof}[Proof of the bracketing bound in Theorem~\ref{thm:bracketing-lower}]
The lower bound follows from a probability measure whose mass on a parameter
bracket is controlled by the $d$th power of its width. We use the polynomial
product measure $\pi_d$ from \cite{GnewuchWahlstromWinzen2012}, defined on
$[0,1]^d$ by
\begin{equation}
  \mathrm d\pi_d(\boldsymbol z)
  =d^d\left(\prod_{j=1}^d z_j\right)^{d-1}
  \mathrm d\boldsymbol z.
  \label{eq:lower-measure}
\end{equation}
This is a probability measure.

Let $B=[\boldsymbol x,\boldsymbol y]$. If some $y_j=0$, then
$\pi_d(B)=0$. Otherwise set $r_j=x_j/y_j$. By
\eqref{eq:lower-measure},
\[
  \pi_d(B)
  =\prod_{j=1}^d(y_j^d-x_j^d)
  =V(\boldsymbol y)^d\prod_{j=1}^d(1-r_j^d).
\]
Applying the arithmetic--geometric mean inequality twice gives
\[
  \left(\prod_{j=1}^d(1-r_j^d)\right)^{1/d}
  \le \frac1d\sum_{j=1}^d(1-r_j^d)
  =1-\frac1d\sum_{j=1}^d r_j^d
  \le 1-\prod_{j=1}^d r_j.
\]
Hence
\begin{equation}
  \pi_d(B)
  \le
  \bigl(V(\boldsymbol y)-V(\boldsymbol x)\bigr)^d.
  \label{eq:bracket-measure-bound}
\end{equation}
Every $\delta$-bracket therefore has $\pi_d$-measure at most $\delta^d$.
If a bracketing cover contains $M$ brackets, subadditivity yields
\[
  1\le M\delta^d.
\]
Thus $M\ge\delta^{-d}$. Since $M$ is an integer, the claim follows.
\end{proof}

\begin{remark}
\label{rem:measure-context}
The measure in \eqref{eq:lower-measure} is exactly the polynomial product
measure used by Gnewuch, Wahlstr\"om, and Winzen
\cite[p.~786 and Section~5.1]{GnewuchWahlstromWinzen2012} for nonuniform
sampling in their threshold-accepting algorithm for star-discrepancy
approximation. Its role here is different: \eqref{eq:bracket-measure-bound}
turns it into a covering measure whose mass is controlled sharply by bracket
width. Related volume-biased product measures also occur in generalized
discrepancy \cite{DickL1_2026,NovakPillichshammer2026}.
\end{remark}

\begin{remark}
\label{rem:lower-equality}
For a nondegenerate bracket, equality in
\eqref{eq:bracket-measure-bound} holds precisely when
$x_1/y_1=\cdots=x_d/y_d$. The extremal brackets are therefore homothetic.
This agrees with a different extremal property proved by Gnewuch
\cite[Lemma~1.1]{Gnewuch2008JCo}: among $\delta$-brackets with a fixed upper
right corner, a bracket of largest Lebesgue volume is homothetic. In the
lower-bound argument of \cite[Lemma~1.2]{Gnewuch2008JCo}, the varying
Lebesgue volumes of such maximal brackets lead to a continuous averaging
procedure. In contrast, \eqref{eq:bracket-measure-bound} gives the uniform
bound $\pi_d(B)\le\delta^d$ for every $\delta$-bracket, so subadditivity
reduces the lower bound to a discrete counting argument.
\end{remark}

\subsection{\texorpdfstring{$\delta$}{delta}-cover numbers}
\label{sec:cover-lower}

\begin{proof}[Proof of the cover bound in Theorem~\ref{thm:cover-lower}]
Let $\Gamma$ be a $\delta$-cover. For $\boldsymbol y\in\Gamma$, put
\[
  C_{\boldsymbol y}
  =\left\{
    \boldsymbol z\le\boldsymbol y:
    V(\boldsymbol y)-V(\boldsymbol z)\le\delta
  \right\}.
\]
For every $\boldsymbol z\ne\boldsymbol0$, choose
$\boldsymbol x,\boldsymbol y\in\Gamma\cup\{\boldsymbol0\}$ with
$\boldsymbol x\le\boldsymbol z\le\boldsymbol y$ and
$V(\boldsymbol y)-V(\boldsymbol x)\le\delta$. Then
$\boldsymbol y\in\Gamma$ and
$V(\boldsymbol y)-V(\boldsymbol z)\le\delta$, so
$\boldsymbol z\in C_{\boldsymbol y}$. Thus these sets cover
$[0,1]^d\setminus\{\boldsymbol0\}$. We estimate their masses using the same
probability measure $\pi_d$ defined in \eqref{eq:lower-measure}.
Let $\lambda_d$ denote $d$-dimensional Lebesgue measure.

Fix $\boldsymbol y\in\Gamma$. If $V(\boldsymbol y)\le\delta$, then
\[
  \pi_d(C_{\boldsymbol y})
  \le\pi_d([\boldsymbol0,\boldsymbol y])
  =V(\boldsymbol y)^d
  \le\delta^d
  \le\frac{d^d}{d!}\,\delta^d.
\]
Suppose that $V(\boldsymbol y)>\delta$ and set
\[
  u_j=\left(\frac{z_j}{y_j}\right)^d,
  \qquad j=1,\ldots,d.
\]
Since $\mathrm d u_j=d\,z_j^{d-1}y_j^{-d}\,\mathrm d z_j$, we have
\[
  \mathrm d\pi_d(\boldsymbol z)
  =V(\boldsymbol y)^d\,\mathrm d\boldsymbol u.
\]
Under this change of variables, $C_{\boldsymbol y}$ is mapped onto
\[
  \left\{
    \boldsymbol u\in[0,1]^d:
    \left(\prod_{j=1}^d u_j\right)^{1/d}
    \ge1-\frac{\delta}{V(\boldsymbol y)}
  \right\}.
\]
Hence
\[
  \pi_d(C_{\boldsymbol y})
  =V(\boldsymbol y)^d\lambda_d\left(
    \left\{
      \boldsymbol u\in[0,1]^d:
      \left(\prod_{j=1}^d u_j\right)^{1/d}
      \ge1-\frac{\delta}{V(\boldsymbol y)}
    \right\}
  \right).
\]
For every $\boldsymbol u$ in this set, the arithmetic--geometric mean
inequality gives
\[
  \frac1d\sum_{j=1}^d u_j
  \ge\left(\prod_{j=1}^d u_j\right)^{1/d}
  \ge1-\frac{\delta}{V(\boldsymbol y)},
\]
or equivalently
\[
  \sum_{j=1}^d(1-u_j)
  \le\frac{d\delta}{V(\boldsymbol y)}.
\]
Thus the transformed region is contained in a simplex of volume
$\bigl(d\delta/V(\boldsymbol y)\bigr)^d/d!$, and therefore
\[
  \pi_d(C_{\boldsymbol y})
  \le V(\boldsymbol y)^d
  \frac{1}{d!}
  \left(\frac{d\delta}{V(\boldsymbol y)}\right)^d
  =\frac{d^d}{d!}\,\delta^d.
\]
If $M=|\Gamma|$, subadditivity now yields
\[
  1
  \le\sum_{\boldsymbol y\in\Gamma}\pi_d(C_{\boldsymbol y})
  \le M\frac{d^d}{d!}\,\delta^d.
\]
Therefore $M\ge(d!/d^d)\delta^{-d}$, and the integer-valued statement
follows.
\end{proof}

\begin{remark}
\label{rem:cover-lower-comparison}
The standard relation \cite[(1)]{Gnewuch2008JCo}
\[
  2N_{[]}(d,\delta)
  \le N(d,\delta)\bigl(N(d,\delta)+1\bigr)
\]
would give only a lower bound of order $\delta^{-d/2}$ when combined with
the bracketing bound in Theorem~\ref{thm:bracketing-lower}. The cover bound in
Theorem~\ref{thm:cover-lower} obtains the correct exponent directly.
\end{remark}

\section{The anisotropic construction}
\label{sec:anisotropic-construction}

\subsection{Construction and bracketing bounds}
\label{sec:construction}

Fix $d\ge2$, $0<\delta<1$, and $K\in\mathbb N$. Put
\[
  \ell=\frac{1-\delta}{K},
  \qquad
  a_r=\delta+r\ell
  \quad (r=0,\ldots,K).
\]
Partition $[\delta,1]^d$ into the coarse boxes
\[
  Q_{\boldsymbol k}
  =\prod_{j=1}^d[a_{k_j-1},a_{k_j}],
  \qquad
  \boldsymbol k\in\{1,\ldots,K\}^d,
\]
and let
$\boldsymbol q_{\boldsymbol k}=(a_{k_1},\ldots,a_{k_d})$ be the upper
endpoint of $Q_{\boldsymbol k}$. Define
\begin{align}
  A_j(\boldsymbol q)
  &:=\prod_{i\ne j}q_i, \notag\\
  m_j(\boldsymbol k)
  &:=\left\lceil
    \frac{d\ell A_j(\boldsymbol q_{\boldsymbol k})}{\delta}
  \right\rceil.
  \label{eq:local-definitions}
\end{align}
The $j$th side of $Q_{\boldsymbol k}$ is divided into
$m_j(\boldsymbol k)$ equal intervals. The resulting microboxes are the
interior brackets.

The boundary region is handled separately. For each
$j\in\{1,\ldots,d\}$, let $\boldsymbol b^{(j)}$ have $j$th coordinate
$\delta$ and all other coordinates equal to one. Add the bracket
$[\boldsymbol0,\boldsymbol b^{(j)}]$.

\begin{figure}[H]
\centering
\begin{tikzpicture}[line cap=round,line join=round]
  \begin{scope}[x=6.6cm,y=6.6cm]
    \fill[gray!18] (0,0) rectangle (0.05,1);
    \fill[gray!18] (0,0) rectangle (1,0.05);
    \draw[black!75,line width=0.55pt] (0,0) rectangle (1,1);
    \foreach \xl/\xu/\my in {
      0.05/0.185714/2,
      0.185714/0.321429/2,
      0.321429/0.457143/3,
      0.457143/0.592857/4,
      0.592857/0.728571/4,
      0.728571/0.864286/5,
      0.864286/1/6}{
      \foreach \yl/\yu/\mx in {
        0.05/0.185714/2,
        0.185714/0.321429/2,
        0.321429/0.457143/3,
        0.457143/0.592857/4,
        0.592857/0.728571/4,
        0.728571/0.864286/5,
        0.864286/1/6}{
        \pgfmathtruncatemacro{\mxm}{\mx-1}
        \pgfmathtruncatemacro{\mym}{\my-1}
        \ifnum\mxm>0
          \foreach \i in {1,...,\mxm}{
            \pgfmathsetmacro{\xx}{\xl+\i*(\xu-\xl)/\mx}
            \draw[blue!48,line width=0.18pt] (\xx,\yl)--(\xx,\yu);
          }
        \fi
        \ifnum\mym>0
          \foreach \j in {1,...,\mym}{
            \pgfmathsetmacro{\yy}{\yl+\j*(\yu-\yl)/\my}
            \draw[blue!48,line width=0.18pt] (\xl,\yy)--(\xu,\yy);
          }
        \fi
      }
    }
    \foreach \t in {0.05,0.185714,0.321429,0.457143,0.592857,0.728571,0.864286}{
      \draw[black!65,line width=0.42pt] (\t,0.05)--(\t,1);
      \draw[black!65,line width=0.42pt] (0.05,\t)--(1,\t);
    }
    \draw[black!85,line width=0.95pt]
      (0.592857,0.321429) rectangle (0.728571,0.457143);
    \node[below left,font=\scriptsize] at (0,0) {$0$};
    \node[below,font=\scriptsize] at (0.05,0) {$\delta$};
    \node[left,font=\scriptsize] at (0,0.05) {$\delta$};
    \node[below,font=\scriptsize] at (1,0) {$1$};
    \node[left,font=\scriptsize] at (0,1) {$1$};
    \node[rotate=90,font=\scriptsize,gray!65!black] at (0.025,0.54)
      {$[\boldsymbol{0},\boldsymbol{b}^{(1)}]$};
    \node[font=\scriptsize,gray!65!black] at (0.54,0.025)
      {$[\boldsymbol{0},\boldsymbol{b}^{(2)}]$};
    \node[font=\small] at (0.50,1.075)
      {$d=2,\ \delta=0.05,\ K=7$};
  \end{scope}
  \draw[-{Latex[length=2.2mm]},black!75,line width=0.6pt]
    (4.75cm,2.62cm) .. controls (5.85cm,2.45cm) and (6.35cm,2.25cm) .. (7.15cm,2.18cm);
  \begin{scope}[shift={(7.45cm,0.95cm)},x=4.5cm,y=4.5cm]
    \draw[black!85,line width=0.9pt] (0,0) rectangle (1,1);
    \foreach \i in {1,2}
      \draw[blue!60,line width=0.36pt] ({\i/3},0)--({\i/3},1);
    \foreach \j in {1,2,3}
      \draw[blue!60,line width=0.36pt] (0,{\j/4})--(1,{\j/4});
    \fill[blue!15] (2/3,3/4) rectangle (1,1);
    \draw[blue!70!black,line width=0.7pt] (2/3,3/4) rectangle (1,1);
    \fill[blue!70!black] (2/3,3/4) circle[radius=0.012];
    \fill[blue!70!black] (1,1) circle[radius=0.012];
    \node[below left,font=\scriptsize] at (2/3,3/4) {$\boldsymbol{x}$};
    \node[above right,font=\scriptsize] at (1,1) {$\boldsymbol{y}$};
    \node[font=\scriptsize,align=center] at (0.5,-0.15)
      {a representative coarse box $Q_{\boldsymbol{k}}$};
    \node[font=\scriptsize,align=center] at (0.5,-0.29)
      {$m_1(\boldsymbol{k})=3$ and $m_2(\boldsymbol{k})=4$};
  \end{scope}
\end{tikzpicture}
\caption{The construction for $d=2$, $\delta=0.05$, and $K=7$.
The shaded boundary strips are covered separately. The remaining square is
partitioned into coarse boxes, each carrying a box-dependent anisotropic
grid. The enlargement shows one representative coarse box. Every microbox
is used as a bracket. The TikZ code for this explanatory figure was
prepared with the assistance of ChatGPT 5.6 Sol (OpenAI).}
\label{fig:construction-d2}
\end{figure}

Figure~\ref{fig:construction-d2} shows the construction in dimension two.

Write
\[
  T_{d-1}=T_{d-1}(K,\delta)=\sum_{r=1}^K a_r^{d-1}.
\]
We use the following counting inequality for both brackets and vertices.
For $b,c\ge0$ and $q_1,\ldots,q_d\ge0$, H\"older's inequality
for a sum of two terms gives, for each $j$,
\[
  b+c\prod_{i\ne j}q_i
  \le
  \prod_{i\ne j}
  \left(b+cq_i^{d-1}\right)^{1/(d-1)}.
\]
For $d=2$, this is an equality. Multiplying over $j$ yields
\[
  \prod_{j=1}^d\left(b+cA_j(\boldsymbol q)\right)
  \le
  \prod_{j=1}^d\left(b+cq_j^{d-1}\right).
\]
Consequently, the Cartesian structure of the coarse grid gives
\begin{equation}
  \sum_{\boldsymbol k}
  \prod_{j=1}^d\left(b+cA_j(\boldsymbol q_{\boldsymbol k})\right)
  \le
  \sum_{\boldsymbol k}\prod_{j=1}^d
  \left(b+ca_{k_j}^{d-1}\right)
  =\left(bK+cT_{d-1}\right)^d.
  \label{eq:grid-counting}
\end{equation}

\begin{theorem}
\label{thm:parameterized}
For every $d\ge2$, $0<\delta<1$, and $K\in\mathbb N$, the construction above
is a $\delta$-bracketing cover. If $M_{d,\delta,K}$ denotes its number of
brackets, then
\begin{equation}
  N_{[]}(d,\delta)\le M_{d,\delta,K}
  \le d+\delta^{-d}
  \left(d\ell T_{d-1}+\delta K\right)^d.
  \label{eq:bracketing-bound}
\end{equation}
\end{theorem}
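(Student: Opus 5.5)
We need to prove two things: the construction is a $\delta$-bracketing cover, and its cardinality satisfies \eqref{eq:bracketing-bound}.

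\emph{Covering.} Let $\boldsymbol z\in[0,1]^d$. If some $z_j\le\delta$, then $\boldsymbol z\in[\boldsymbol0,\boldsymbol b^{(j)}]$. This boundary bracket is admissible because $V(\boldsymbol b^{(j)})-V(\boldsymbol 0)=\delta$. Otherwise $\boldsymbol z\in[\delta,1]^d$, so $\boldsymbol z$ lies in some coarse box $Q_{\boldsymbol k}$ and hence in one of its microboxes.

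\emph{Microboxes are $\delta$-brackets.} Take a microbox $[\boldsymbol x,\boldsymbol y]\subseteq Q_{\boldsymbol k}$ with side lengths $h_j=y_j-x_j=\ell/m_j(\boldsymbol k)$. Telescoping one coordinate at a time gives
\[
  V(\boldsymbol y)-V(\boldsymbol x)
  =\sum_{j=1}^d h_j\prod_{i<j}x_i\prod_{i>j}y_i
  \le\sum_{j=1}^d h_j A_j(\boldsymbol q_{\boldsymbol k}),
\]
since $\boldsymbol x\le\boldsymbol y\le\boldsymbol q_{\boldsymbol k}$. By the definition of $m_j(\boldsymbol k)$ in \eqref{eq:local-definitions}, $h_j\le \delta/(dA_j(\boldsymbol q_{\boldsymbol k}))$. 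Each of the $d$ summands is therefore at most $\delta/d$, and the total is at most $\delta$. This completes the cover claim.

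\emph{Counting.} The number of brackets is
\[
  M_{d,\delta,K}=d+\sum_{\boldsymbol k}\prod_{j=1}^d m_j(\boldsymbol k).
\]
Bound each factor by $m_j(\boldsymbol k)\le 1+d\ell A_j(\boldsymbol q_{\boldsymbol k})/\delta=\delta^{-1}\bigl(\delta+d\ell A_j(\boldsymbol q_{\boldsymbol k})\bigr)$. The product over $j$ is then at most $\delta^{-d}\prod_j(\delta+d\ell A_j(\boldsymbol q_{\boldsymbol k}))$. Now apply the counting inequality \eqref{eq:grid-counting} with $b=\delta$ and $c=d\ell$. This bounds the sum over $\boldsymbol k$ by $\delta^{-d}(\delta K+d\ell T_{d-1})^d$, which is exactly \eqref{eq:bracketing-bound}. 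The inequality $N_{[]}\le M_{d,\delta,K}$ holds by definition.

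The only step needing care is the monotone telescoping estimate for the microboxes: one must check that every partial product is bounded by $A_j(\boldsymbol q_{\boldsymbol k})$. This follows because all coordinates are nonnegative and bounded by those of $\boldsymbol q_{\boldsymbol k}$. The rest is direct use of \eqref{eq:grid-counting}.
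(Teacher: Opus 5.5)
Your proof is correct and follows the paper's argument step for step. You use the same boundary/interior case split for coverage and the same telescoping bound $V(\boldsymbol y)-V(\boldsymbol x)\le\sum_j (y_j-x_j)A_j(\boldsymbol q_{\boldsymbol k})\le\delta$; telescoping in the opposite coordinate order changes nothing. You also apply \eqref{eq:grid-counting} in the same way, only with $(b,c)=(\delta,d\ell)$ instead of the paper's rescaled choice $(1,d\ell/\delta)$.
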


\begin{proof}
We first verify coverage. If $z_j\le\delta$ for some $j$, then
$\boldsymbol0\le\boldsymbol z\le\boldsymbol b^{(j)}$ and
$V(\boldsymbol b^{(j)})=\delta$. Otherwise
$\boldsymbol z\in[\delta,1]^d$ and belongs to a microbox of one of the
coarse boxes.

Let $[\boldsymbol x,\boldsymbol y]$ be a microbox in
$Q_{\boldsymbol k}$ and write $\boldsymbol q=\boldsymbol q_{\boldsymbol k}$.
By \eqref{eq:local-definitions}, $y_j-x_j\le
\delta/(dA_j(\boldsymbol q))$. The exact telescoping identity and
$\boldsymbol x,\boldsymbol y\le\boldsymbol q$ give
\[
\begin{aligned}
  V(\boldsymbol y)-V(\boldsymbol x)
  &=\sum_{j=1}^d
  (y_j-x_j)
  \left(\prod_{i<j}y_i\right)
  \left(\prod_{i>j}x_i\right)\\
  &\le\sum_{j=1}^d (y_j-x_j)A_j(\boldsymbol q)
  \le\delta.
\end{aligned}
\]
Thus every microbox is a $\delta$-bracket.

Since $m_j(\boldsymbol k)\le
1+(d\ell/\delta)A_j(\boldsymbol q_{\boldsymbol k})$,
\eqref{eq:grid-counting} with $b=1$ and $c=d\ell/\delta$ gives
\[
  \sum_{\boldsymbol k}\prod_{j=1}^d m_j(\boldsymbol k)
  \le
  \left(K+\frac{d\ell T_{d-1}}{\delta}\right)^d.
\]
Adding the $d$ boundary brackets proves \eqref{eq:bracketing-bound}.
\end{proof}

\subsection{The induced \texorpdfstring{$\delta$}{delta}-cover}
\label{sec:cover}

\begin{proposition}
\label{prop:cover-count}
Let $\Gamma_{d,\delta,K}$ consist of all vertices of all local microgrids.
Then $\Gamma_{d,\delta,K}$ is a $\delta$-cover and
\begin{equation}
  N(d,\delta)\le |\Gamma_{d,\delta,K}|
  \le \delta^{-d}
  \left(d\ell T_{d-1}+2\delta K\right)^d.
  \label{eq:cover-bound}
\end{equation}
\end{proposition}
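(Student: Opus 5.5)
The plan is to prove the two claims separately: first that $\Gamma_{d,\delta,K}$ is a $\delta$-cover, by recycling the coverage argument of Theorem~\ref{thm:parameterized}, and then the cardinality bound, by reusing the counting inequality \eqref{eq:grid-counting} with a different value of $b$.

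\textbf{The cover property.} Let $\boldsymbol z\in[0,1]^d$ and split into two cases.
\begin{itemize}
\item If $\boldsymbol z\in[\delta,1]^d$, then $\boldsymbol z$ lies in some microbox $[\boldsymbol x,\boldsymbol y]$ of some coarse box $Q_{\boldsymbol k}$. Both corners $\boldsymbol x,\boldsymbol y$ are vertices of the local microgrid, so they lie in $\Gamma_{d,\delta,K}$. The proof of Theorem~\ref{thm:parameterized} already shows $V(\boldsymbol y)-V(\boldsymbol x)\le\delta$.
\item If $z_j\le\delta$ for some $j$, take the lower point to be $\boldsymbol0$ and the upper point to be $\boldsymbol b^{(j)}$. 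Then $\boldsymbol0\le\boldsymbol z\le\boldsymbol b^{(j)}$ and $V(\boldsymbol b^{(j)})-V(\boldsymbol 0)=\delta$.
\end{itemize}
The second case requires $\boldsymbol b^{(j)}\in\Gamma_{d,\delta,K}$. Its $j$th coordinate is $\delta=a_0$ and its other coordinates are $1=a_K$. So $\boldsymbol b^{(j)}$ is a corner of the coarse box $Q_{\boldsymbol k}$ with $k_j=1$ and $k_i=K$ for $i\ne j$: it is the lower endpoint in direction $j$ and the upper endpoint in the other directions. Every corner of a coarse box is a vertex of its microgrid, so $\boldsymbol b^{(j)}\in\Gamma_{d,\delta,K}$. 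This membership is the only point needing care. It is also why no separate boundary points appear in \eqref{eq:cover-bound}, in contrast with the additive $d$ in \eqref{eq:bracketing-bound}.

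\textbf{The counting bound.} Bound $|\Gamma_{d,\delta,K}|$ crudely by the sum over $\boldsymbol k$ of the number of vertices of each local grid, ignoring that vertices on shared faces of coarse boxes are counted more than once. The grid in $Q_{\boldsymbol k}$ has $\prod_j\bigl(m_j(\boldsymbol k)+1\bigr)$ vertices. From \eqref{eq:local-definitions},
\[
  m_j(\boldsymbol k)+1\le 2+\frac{d\ell}{\delta}A_j(\boldsymbol q_{\boldsymbol k}).
\]
Applying \eqref{eq:grid-counting} with $b=2$ and $c=d\ell/\delta$ gives
\[
  |\Gamma_{d,\delta,K}|\le\Bigl(2K+\frac{d\ell T_{d-1}}{\delta}\Bigr)^d
  =\delta^{-d}\bigl(d\ell T_{d-1}+2\delta K\bigr)^d .
\]
Finally, $N(d,\delta)\le|\Gamma_{d,\delta,K}|$ holds by definition of $N(d,\delta)$, since $\Gamma_{d,\delta,K}$ is a $\delta$-cover.
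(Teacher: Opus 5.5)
Your proposal is correct and matches the paper's proof essentially step for step. You show that each $\boldsymbol b^{(j)}$ is a coarse-grid vertex because $a_0=\delta$ and $a_K=1$, reuse the microbox width estimate from Theorem~\ref{thm:parameterized} on $[\delta,1]^d$, and bound the cardinality via $m_j(\boldsymbol k)+1\le 2+(d\ell/\delta)A_j(\boldsymbol q_{\boldsymbol k})$ together with \eqref{eq:grid-counting} at $b=2$, $c=d\ell/\delta$ (counting shared vertices more than once only weakens the bound).
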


\begin{proof}
Since $a_0=\delta$ and $a_K=1$, each $\boldsymbol b^{(j)}$ is a
coarse-grid vertex and hence a local-grid vertex. By
Theorem~\ref{thm:parameterized}, every point of $[\delta,1]^d$ lies in a
microbox whose lower and upper vertices belong to
$\Gamma_{d,\delta,K}$ and whose volume difference is at most $\delta$.
If $z_j\le\delta$ for some $j$, then
$\boldsymbol0\le\boldsymbol z\le\boldsymbol b^{(j)}$ and
$V(\boldsymbol b^{(j)})=\delta$. Hence $\Gamma_{d,\delta,K}$ is a
$\delta$-cover.

Summing the local vertex counts and using
$m_j(\boldsymbol k)+1\le
2+(d\ell/\delta)A_j(\boldsymbol q_{\boldsymbol k})$,
\eqref{eq:grid-counting} with $b=2$ and $c=d\ell/\delta$ gives
\[
  |\Gamma_{d,\delta,K}|
  \le
  \sum_{\boldsymbol k}\prod_{j=1}^d(m_j(\boldsymbol k)+1)
  \le
  \left(2K+\frac{d\ell T_{d-1}}{\delta}\right)^d.
\]
This proves \eqref{eq:cover-bound}.
\end{proof}

\begin{remark}
The estimate in Proposition~\ref{prop:cover-count} is not obtained by
counting the two endpoints of every bracket in
Theorem~\ref{thm:parameterized}. That argument would give only
\[
  N(d,\delta)\le 2M_{d,\delta,K}.
\]
Instead, $\Gamma_{d,\delta,K}$ contains each local-grid vertex only once.
Adjacent microboxes share most of their vertices, and counting the distinct
vertices removes the factor $2$ from the leading term. At the asymptotic
level, this is what gives the upper coefficient $1$ in
Theorem~\ref{thm:cover-main}, rather than the coefficient $2$ obtained from
the general inequality $N(d,\delta)\le2N_{[]}(d,\delta)$.
\end{remark}

\subsection{Explicit bounds}
\label{sec:explicit}

The following elementary estimate sharpens the right-Riemann-sum error.

\begin{lemma}
\label{lem:right-sum}
Let $p\ge1$ be an integer, $h=(1-\delta)/K$, and
$a_r=\delta+rh$. Then
\begin{equation}
  h\sum_{r=1}^K a_r^p
  \le
  \frac{1-\delta^{p+1}}{p+1}
  +\frac h2(1-\delta^p)
  +\frac{ph^2}{12}(1-\delta^{p-1}).
  \label{eq:right-sum-quadratic}
\end{equation}
\end{lemma}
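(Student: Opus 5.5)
We prove \eqref{eq:right-sum-quadratic} interval by interval, using a trapezoidal comparison with an explicit Euler--Maclaurin-type remainder. Put $f(t)=t^p$ on $[\delta,1]$, which is convex for $p\ge1$. For each $r=1,\ldots,K$ we write the right endpoint value as
\[
  h f(a_r)=\int_{a_{r-1}}^{a_r}f(t)\,\mathrm dt+\frac h2\bigl(f(a_r)-f(a_{r-1})\bigr)
  +\Bigl(\frac h2\bigl(f(a_r)+f(a_{r-1})\bigr)-\int_{a_{r-1}}^{a_r}f(t)\,\mathrm dt\Bigr).
\]
This identity is exact. Summing over $r$, the first terms give $\int_\delta^1 t^p\,\mathrm dt=(1-\delta^{p+1})/(p+1)$, and the second terms telescope to $\frac h2(1-\delta^p)$. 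It therefore remains to bound the sum of the trapezoidal errors $E_r$ in the last bracket by $\frac{ph^2}{12}(1-\delta^{p-1})$.

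For the trapezoidal error on one interval we use the Peano kernel representation
\[
  E_r=\frac12\int_{a_{r-1}}^{a_r}(t-a_{r-1})(a_r-t)f''(t)\,\mathrm dt.
\]
When $p=1$ we have $f''=0$, so $E_r=0$ and the claimed term $\frac{h^2}{12}(1-\delta^0)=0$ also vanishes. Hence the bound holds with equality.

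For $p\ge2$, the tempting first step is to bound $f''$ by its maximum $f''(a_r)$ on the interval. That gives $E_r\le\frac{h^3}{12}f''(a_r)$, a right Riemann sum of $f''$, and we would then need to control $h\sum_r f''(a_r)$ by $\int f''$. Since $f''$ is increasing, a right sum overestimates the integral, so this crude route fails. This is the main obstacle. Instead we use the exact form of the kernel integral. Because $f''$ is nondecreasing, its average against the symmetric weight $(t-a_{r-1})(a_r-t)$ is at most the average of $f''$ against uniform weight. This follows from Chebyshev's integral inequality: the weight is symmetric about the midpoint, so we pair $t$ with its reflection and compare.

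Concretely, the kernel has total mass $\int(t-a_{r-1})(a_r-t)\,\mathrm dt=h^3/6$. The inequality needed is that the weighted mean of $f''$ is at most its plain mean $\frac1h\int_{a_{r-1}}^{a_r}f''$. To verify it, write the normalized kernel minus the uniform density as a function $g$ that is symmetric about the midpoint, integrates to zero, and is positive in the middle and negative near the ends. Because $g$ is symmetric, $\int g f''=\int g\cdot\tfrac12\bigl(f''(t)+f''(\bar t)\bigr)$, where $\bar t$ is the reflection of $t$. For convex $f''$ (true for $p\ge3$, and constant for $p=2$), this symmetrized function is smallest at the midpoint, and since $g\ge0$ exactly near the midpoint we get $\int gf''\le0$.

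With this, $E_r\le\frac12\cdot\frac{h^3}{6}\cdot\frac1h\int_{a_{r-1}}^{a_r}f''=\frac{h^2}{12}\bigl(f'(a_r)-f'(a_{r-1})\bigr)$. Summing telescopes to $\frac{h^2}{12}\bigl(f'(1)-f'(\delta)\bigr)=\frac{ph^2}{12}(1-\delta^{p-1})$, which is exactly the last term of \eqref{eq:right-sum-quadratic}. Combined with the two exact sums above, this completes the proof.
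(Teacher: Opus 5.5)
Your proof is correct. Its skeleton is the same as the paper's. Both arguments reduce the lemma to the local estimate $\frac h2\bigl(f(a_{r-1})+f(a_r)\bigr)-\int_{a_{r-1}}^{a_r}f\le\frac{h^2}{12}\bigl(f'(a_r)-f'(a_{r-1})\bigr)$ for $f(t)=t^p$, telescope it, and turn the composite trapezoidal sum into the right sum. Your exact three-term identity is that conversion done interval by interval.

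The two routes differ only in how the local estimate is proved.
\begin{itemize}
\item The paper expands both sides in powers of $x=a_{r-1}\ge0$ and $h$. The trapezoidal error has coefficients $\binom pk\frac{k-1}{2(k+1)}$, the majorant has $\binom pk\frac k{12}$, and the termwise comparison is equivalent to $(k-2)(k-3)\ge0$. This is purely finite algebra and shows the equality cases $k=2,3$, but it is tied to integer monomials.
\item You use the Peano kernel and compare the parabolic weight with the uniform weight on convex functions. This gives the local estimate for every $C^2$ function with convex $f''$, which is the one-sided Euler--Maclaurin bound (valid when $f^{(4)}\ge0$). It also explains the paper's condition, since $(t^{p-2})''=(p-2)(p-3)t^{p-4}$. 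For $\delta>0$ it would cover real exponents $p\ge1$ with $(p-2)(p-3)\ge0$. The cost is somewhat heavier machinery.
\end{itemize}

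One justification in your write-up needs repair. You claim that monotonicity of $f''$ alone makes its parabolic-weighted mean at most its plain mean, ``by Chebyshev's integral inequality.'' This is false. For the nondecreasing concave function $\sqrt t$ on $[0,1]$, the weighted mean is $24/35>2/3$. Chebyshev's inequality does not apply here because the weight is not monotone.

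What actually carries the step is convexity, as in your next paragraph. There, however, it is not enough that $\psi(t)=\frac12\bigl(f''(t)+f''(\bar t)\bigr)$ is smallest at the midpoint $m$. You need that $\psi$ is nondecreasing in $|t-m|$, which follows from convexity and symmetry. Then let $\psi_0$ be the value of $\psi$ at the distance from $m$ where $g$ changes sign. You get $\int g\psi=\int g(\psi-\psi_0)\le0$, because $\int g=0$ and the two factors have opposite signs pointwise.
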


\begin{proof}
For $x\ge0$, define the local trapezoidal-rule error by
\[
  E_p(x,h)
  :=
  \frac h2\bigl(x^p+(x+h)^p\bigr)
  -\int_x^{x+h}t^p\,\mathrm dt.
\]
Expanding both sides gives
\begin{align*}
  E_p(x,h)
  &=
  \sum_{k=2}^p
  \binom pk\frac{k-1}{2(k+1)}x^{p-k}h^{k+1},\\
  \frac{ph^2}{12}\bigl((x+h)^{p-1}-x^{p-1}\bigr)
  &=
  \sum_{k=2}^p
  \binom pk\frac{k}{12}x^{p-k}h^{k+1}.
\end{align*}
Here the terms with $k=0,1$ in the first expansion vanish. Moreover,
\[
  \frac{k-1}{2(k+1)}\le\frac{k}{12}
  \qquad (2\le k\le p),
\]
since this is equivalent to $(k-2)(k-3)\ge0$. Hence
\[
  E_p(x,h)
  \le
  \frac{ph^2}{12}\bigl((x+h)^{p-1}-x^{p-1}\bigr).
\]

Summing this local inequality with $x=a_{r-1}$ for
$r=1,\ldots,K$ and using telescoping gives
\[
  \frac h2\left(\delta^p+2\sum_{r=1}^{K-1}a_r^p+1\right)
  -\int_\delta^1 t^p\,\mathrm dt
  \le
  \frac{ph^2}{12}(1-\delta^{p-1}).
\]
This implies \eqref{eq:right-sum-quadratic}.
\end{proof}

\begin{corollary}
\label{cor:closed-bounds}
Let $d\ge2$ and $0<\delta<1$. Define
\[
  A_{d,\delta}=d(1-\delta)(1-\delta^{d-1}),
  \qquad
  R_{d,\delta}
  =\frac{d-1}{6}
  \frac{(1-\delta)(1-\delta^{d-2})}{1-\delta^{d-1}}.
\]
Then
\begin{align}
  N_{[]}(d,\delta)
  &\le
  d+\delta^{-d}
  \left(
    1-\delta^d
    +\sqrt{2A_{d,\delta}\delta}
    +\delta(1+R_{d,\delta})
  \right)^d,
  \label{eq:closed-bracketing}\\
  N(d,\delta)
  &\le
  \delta^{-d}
  \left(
    1-\delta^d
    +2\sqrt{A_{d,\delta}\delta}
    +2\delta(1+R_{d,\delta})
  \right)^d.
  \label{eq:closed-cover}
\end{align}
For fixed $d$, both right-hand sides are
$\delta^{-d}\bigl(1+O_d(\sqrt\delta)\bigr)$ as $\delta\downarrow0$.
\end{corollary}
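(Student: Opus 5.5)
The plan is to specialize Theorem~\ref{thm:parameterized} and Proposition~\ref{prop:cover-count} to a well-chosen $K$, bounding $T_{d-1}$ with Lemma~\ref{lem:right-sum}. Apply Lemma~\ref{lem:right-sum} with $p=d-1\ge1$ and $h=\ell$, and multiply by $d$. With $c_{d,\delta}:=1-\delta^{d-1}$ this gives
\[
  d\ell T_{d-1}
  \le 1-\delta^d+\frac{d\ell}{2}\,c_{d,\delta}
  +\frac{d(d-1)\ell^2}{12}\bigl(1-\delta^{d-2}\bigr).
\]
Since $\delta K=\delta(1-\delta)/\ell$, the base in \eqref{eq:bracketing-bound} is at most
\[
  1-\delta^d+\frac{dc_{d,\delta}}{2}\,\ell+\frac{\delta(1-\delta)}{\ell}
  +\frac{d(d-1)\ell^2}{12}\bigl(1-\delta^{d-2}\bigr).
\]
For \eqref{eq:cover-bound} the same holds with $\delta(1-\delta)/\ell$ replaced by $2\delta(1-\delta)/\ell$.

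Next I would choose $\ell$ close to the minimizer of the two middle terms. For the bracketing bound the continuous minimizer is
\[
  \ell_*=\sqrt{\frac{2\delta(1-\delta)}{dc_{d,\delta}}},
  \qquad\text{with minimum value }\sqrt{2A_{d,\delta}\delta}.
\]
For the cover bound it is
\[
  \ell_*=2\sqrt{\frac{\delta(1-\delta)}{dc_{d,\delta}}},
  \qquad\text{with minimum value }2\sqrt{A_{d,\delta}\delta}.
\]
In both cases set $K=\lceil(1-\delta)/\ell_*\rceil\ge1$. This choice handles integrality automatically. It gives $\ell\le\ell_*$, so the linear term and the quadratic term are each at most their values at $\ell_*$. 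It also gives $K\le(1-\delta)/\ell_*+1$, so $\delta K$ (respectively $2\delta K$) exceeds its value at $\ell_*$ by at most $\delta$ (respectively $2\delta$). No case distinction is needed when $\ell_*>1-\delta$: then $K=1$ and $\ell=1-\delta<\ell_*$, and the same estimates apply.

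It remains to evaluate the quadratic term at $\ell_*$. For the bracketing bound,
\[
  \frac{d(d-1)}{12}\cdot\frac{2\delta(1-\delta)}{dc_{d,\delta}}\bigl(1-\delta^{d-2}\bigr)=\delta R_{d,\delta}.
\]
For the cover bound, the extra factor $2$ in $\ell_*^2$ gives $2\delta R_{d,\delta}$. Collecting the terms yields exactly \eqref{eq:closed-bracketing} and \eqref{eq:closed-cover}.

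For the asymptotic statement, fix $d$. Then $A_{d,\delta}\le d$ and $R_{d,\delta}\le(d-1)/6$, since $1-\delta^{d-2}\le1-\delta^{d-1}$. Hence both bases are $1+O_d(\sqrt\delta)$, and raising to the fixed power $d$ preserves this. The additive $d$ in \eqref{eq:closed-bracketing} is $O_d(1)=\delta^{-d}O_d(\delta^d)$, which is absorbed.

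The main obstacle is the bookkeeping in the rounding of $K$. One has to check that the ceiling moves every term in the favorable direction except $\delta K$, and that this single loss is exactly the additive $\delta$ (respectively $2\delta$) in the stated bounds. Everything else is direct substitution.
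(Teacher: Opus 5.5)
Your proof is correct and follows the paper's argument. You apply Lemma~\ref{lem:right-sum} with $p=d-1$ and $h=\ell$ in the same way, and your choice $K=\lceil(1-\delta)/\ell_*\rceil$ is exactly the paper's $K=\lceil\sqrt{A_{d,\delta}/(2\delta)}\rceil$ (respectively $\lceil\sqrt{A_{d,\delta}/(4\delta)}\rceil$), with the same rounding estimates; parametrizing by $\ell$ instead of $K$ only makes it clearer where that choice of $K$ comes from.
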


\begin{proof}
Lemma~\ref{lem:right-sum} with $p=d-1$ and $h=\ell$ gives
\[
  d\ell T_{d-1}
  \le
  1-\delta^d+\frac{A_{d,\delta}}{2K}
  +\frac{A_{d,\delta}R_{d,\delta}}{2K^2}.
\]
For \eqref{eq:closed-bracketing}, choose
\[
  K=\left\lceil
  \sqrt{\frac{A_{d,\delta}}{2\delta}}
  \right\rceil.
\]
Then
\[
  \frac{A_{d,\delta}}{2K}
  \le \sqrt{\frac{A_{d,\delta}\delta}{2}},
  \qquad
  \delta K
  \le \sqrt{\frac{A_{d,\delta}\delta}{2}}+\delta,
  \qquad
  \frac{A_{d,\delta}R_{d,\delta}}{2K^2}
  \le \delta R_{d,\delta}.
\]
Substitution into \eqref{eq:bracketing-bound} proves
\eqref{eq:closed-bracketing}.

For the covering estimate, choose
$K=\left\lceil\sqrt{A_{d,\delta}/(4\delta)}\right\rceil$.
Combining the same estimates as above with \eqref{eq:cover-bound} proves
\eqref{eq:closed-cover}. Finally,
$A_{d,\delta}=d+O_d(\delta)$ and
$R_{d,\delta}=O_d(1)$ give the stated fixed-dimensional estimate.
\end{proof}

\begin{corollary}
\label{cor:simple-bounds}
Let $d\ge2$ and $0<\delta<1$. Then
\begin{align}
  N_{[]}(d,\delta)
  &\le
  d+\delta^{-d}
  \left(1+\sqrt{2d\delta}+\frac{d+5}{6}\delta\right)^d
  \label{eq:simple-bracketing}\\
  &\le
  d+\delta^{-d}\exp\left(d\sqrt{2d\delta}\right),
  \label{eq:exponential-bracketing}\\
  N(d,\delta)
  &\le
  \delta^{-d}
  \left(1+2\sqrt{d\delta}+\frac{d+5}{3}\delta\right)^d
  \label{eq:simple-cover}\\
  &\le
  \delta^{-d}\exp\left(2d\sqrt{d\delta}\right).
  \label{eq:exponential-cover}
\end{align}
\end{corollary}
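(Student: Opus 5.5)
Our plan is to derive the corollary directly from Corollary~\ref{cor:closed-bounds} by bounding each ingredient crudely but uniformly in $\delta\in(0,1)$. First, we note $1-\delta^d\le 1$ and $A_{d,\delta}=d(1-\delta)(1-\delta^{d-1})\le d$, so that $\sqrt{2A_{d,\delta}\delta}\le\sqrt{2d\delta}$ and $2\sqrt{A_{d,\delta}\delta}\le 2\sqrt{d\delta}$. Second, we bound $R_{d,\delta}$. We write $1-\delta^{d-1}=(1-\delta)\sum_{i=0}^{d-2}\delta^i$ and $1-\delta^{d-2}=(1-\delta)\sum_{i=0}^{d-3}\delta^i$ (empty for $d=2$, so $R_{2,\delta}=0$). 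Hence the quotient is $(1-\delta)\sum_{i\le d-3}\delta^i/\sum_{i\le d-2}\delta^i\le 1-\delta\le1$. This gives $R_{d,\delta}\le (d-1)/6$ and so $1+R_{d,\delta}\le (d+5)/6$. Substituting into \eqref{eq:closed-bracketing} yields \eqref{eq:simple-bracketing}. Substituting into \eqref{eq:closed-cover}, where the last term carries the factor $2\delta(1+R_{d,\delta})\le\frac{d+5}{3}\delta$, yields \eqref{eq:simple-cover}.

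For the exponential forms we use $1+t\le e^t$. For \eqref{eq:exponential-bracketing} it then suffices to show
\[
  \sqrt{2d\delta}+\frac{d+5}{6}\delta\le d\sqrt{2d\delta}/d\cdot d^{0}\cdot\Bigl(\text{i.e. }\le\sqrt{2d\delta}\cdot\tfrac{d}{d}\Bigr)?
\]
This naive reading fails. We therefore observe that the target exponent is $d\sqrt{2d\delta}$ for the $d$th power, so per factor we need
\[
  1+\sqrt{2d\delta}+\frac{d+5}{6}\delta\le\exp\bigl(\sqrt{2d\delta}\bigr).
\]
This is the main obstacle, since the linear term $\frac{d+5}{6}\delta$ must be absorbed by the quadratic term of the exponential. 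Set $s=\sqrt{2d\delta}$. Then $e^s\ge1+s+s^2/2=1+s+d\delta$, and $d\delta\ge\frac{d+5}{6}\delta$ exactly when $6d\ge d+5$, i.e.\ $d\ge1$. So the inequality holds and \eqref{eq:exponential-bracketing} follows.

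The cover bound is handled the same way. With $s=2\sqrt{d\delta}$ we get $e^s\ge1+s+s^2/2=1+2\sqrt{d\delta}+2d\delta$, and $2d\delta\ge\frac{d+5}{3}\delta$ exactly when $6d\ge d+5$. This proves \eqref{eq:exponential-cover}. The only points needing care are the $d=2$ convention for $R_{d,\delta}$ and checking that each inequality holds for all $0<\delta<1$, not only for small $\delta$. Both are covered by the arguments above, because no smallness of $\delta$ was used.
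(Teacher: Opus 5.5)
Your proposal is correct and follows the paper's proof: the polynomial bounds come from Corollary~\ref{cor:closed-bounds} via $1-\delta^d\le1$, $A_{d,\delta}\le d$ and $R_{d,\delta}\le(d-1)/6$, and the exponential bounds come from $1+u+u^2/2\le e^u$ at $u=\sqrt{2d\delta}$ and $u=2\sqrt{d\delta}$ (the paper phrases this as $(d+5)/(12d)\le1/2$). Your geometric-sum justification of $R_{d,\delta}\le(d-1)/6$ is a useful added detail, but delete the garbled false-start display before the per-factor inequality, since it contributes nothing.
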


\begin{proof}
The polynomial bounds follow from Corollary~\ref{cor:closed-bounds} by using
$1-\delta^d\le1$, $A_{d,\delta}\le d$, and
$R_{d,\delta}\le(d-1)/6$.
For the exponential bounds, note that $d\ge2$ implies
$(d+5)/(12d)\le1/2$, so for every $u\ge0$,
\[
  1+u+\frac{d+5}{12d}u^2
  \le 1+u+\frac{u^2}{2}
  \le e^u.
\]
Apply this with $u=\sqrt{2d\delta}$ and $u=2\sqrt{d\delta}$, respectively,
and raise the inequalities to the $d$th power.
\end{proof}

Together with the lower bounds in Theorem~\ref{thm:bracketing-lower} and the
elementary one-dimensional case, Corollary~\ref{cor:closed-bounds}
immediately proves Theorems~\ref{thm:main} and~\ref{thm:cover-main}.

\section{The homothetic logarithmic-shell construction}
\label{sec:homothetic-shells}

The anisotropic construction in Section~\ref{sec:anisotropic-construction}
gives particularly simple explicit finite bounds. We now give a second
construction based on the homothetic geometry singled out by
Remark~\ref{rem:lower-equality}. It is especially effective in high
dimension and also recovers the sharp fixed-dimensional bracketing leading
constant.

\subsection{Construction and exact counts}

Throughout this section, let $0<\delta<1$. For
$\boldsymbol z\in(0,1]^d$, consider the logarithmic change of variables
\[
  \xi_j=-d\log z_j,
  \qquad j=1,\ldots,d.
\]
This change of variables sends each constant-volume hypersurface to a
hyperplane, since
\begin{equation}
  \frac1d\sum_{j=1}^d \xi_j=-\log V(\boldsymbol z).
  \label{eq:hs-log-volume}
\end{equation}
It also sends every homothetic bracket to an axis-parallel cube. Indeed, if
$[\boldsymbol x,\boldsymbol y]$ is homothetic with $x_j=ry_j$ for all $j$,
then every transformed edge has the same length,
\[
  (-d\log x_j)-(-d\log y_j)=-d\log r.
\]

The construction applies this correspondence shell by shell. For suitable
levels $L_i<L_{i+1}$, the region between two volume level sets,
\[
  e^{-L_{i+1}}<V(\boldsymbol z)\le e^{-L_i},
\]
is transformed into the diagonal strip
\begin{equation}\label{eq:ith-layer}
  dL_i\le\sum_{j=1}^d\xi_j<dL_{i+1}.
\end{equation}
We cover the $i$th strip by axis-parallel cubes of side length $c_i$, chosen
so that their inverse images are $\delta$-brackets. Thus the curved shell
geometry in $\boldsymbol z$-space is replaced by a uniform cubic grid in
$\boldsymbol\xi$-space. Figure~\ref{fig:homothetic-log-shell} illustrates
this basic correspondence in dimension two. The precise choices of the shell
levels, cube side lengths, and selected cubes are given below.

Define recursively
\begin{equation}
  L_0=0,
  \qquad
  c_i:=\log\bigl(1+\delta e^{L_i}\bigr),
  \qquad
  L_{i+1}:=\min\left\{L_i+\frac{c_i}{\sqrt{d\delta}},
  d\log(\delta^{-1})\right\}.
  \label{eq:hs-shell-recursion}
\end{equation}
Let $J$ be the first index for which
$L_J=d\log(\delta^{-1})$. Since $c_i\ge\log(1+\delta)$, such $J$ exists with
\begin{equation}
  J\le 1+
  \frac{d^{3/2}\sqrt\delta\,\log(\delta^{-1})}{\log(1+\delta)}.
  \label{eq:hs-J-crude}
\end{equation}
For a point $\boldsymbol{\xi}=(\xi_1,\dots,\xi_d)$ in the $i$th strip,
\eqref{eq:ith-layer} implies
\[
  d\left(\frac{L_i}{c_i}-1\right)
  \le \sum_{j=1}^d \left\lfloor\frac{\xi_j}{c_i}\right\rfloor
  <\frac{dL_{i+1}}{c_i}.
\]
Thus it suffices to select the cubes
$\prod_{j=1}^d[c_in_j,c_i(n_j+1)]$, whose indices
$\boldsymbol n=(n_1,\dots,n_d)\in\mathbb N_0^d$ have total index
$|\boldsymbol n|:=\sum_{j=1}^d n_j$ within these bounds.

Accordingly, define
\begin{equation}
  r_i^-:=\max\left\{0,
  \left\lceil d\left(\frac{L_i}{c_i}-1\right)\right\rceil
  \right\},
  \qquad
  r_i^+:=\left\lceil\frac{dL_{i+1}}{c_i}\right\rceil-1.
  \label{eq:hs-r-pm}
\end{equation}
With
\[
  \mathcal A_i
  :=\left\{
    \boldsymbol n\in\mathbb N_0^d:
    r_i^-\le|\boldsymbol n|\le r_i^+
  \right\},
\]
the number of nonnegative integer vectors $\boldsymbol n$ with
$|\boldsymbol n|=r$ is $\binom{r+d-1}{d-1}$. Hence the number of cubes
selected in the $i$th shell is exactly
\begin{equation}
  |\mathcal A_i|
  =\sum_{r=r_i^-}^{r_i^+}\binom{r+d-1}{d-1}
  =\binom{r_i^++d}{d}-\binom{r_i^-+d-1}{d},
  \label{eq:hs-A-count}
\end{equation}
where $\binom nd=0$ for integers $n<d$.

For each $\boldsymbol n\in\mathcal A_i$, the corresponding bracket in $\boldsymbol z$-space has endpoints
\begin{equation}
  x_{i,\boldsymbol n,j}
  :=\exp\left(-\frac{c_i(n_j+1)}d\right),
  \qquad
  y_{i,\boldsymbol n,j}
  :=\exp\left(-\frac{c_in_j}d\right).
  \label{eq:hs-xy}
\end{equation}
For $j=1,\ldots,d$, let $\boldsymbol b^{(j)}$ have $j$th coordinate
$\delta$ and all other coordinates equal to one. The logarithmic-shell
bracketing family is
\[
  \mathcal B_{d,\delta}^{\rm sh}
  :=
  \left\{[\boldsymbol0,\boldsymbol b^{(j)}]:1\le j\le d\right\}
  \cup
  \left\{
    [\boldsymbol x_{i,\boldsymbol n},\boldsymbol y_{i,\boldsymbol n}]:
    0\le i<J,\ \boldsymbol n\in\mathcal A_i
  \right\}.
\]
The corresponding endpoint set is
\[
  \Gamma_{d,\delta}^{\rm sh}
  :=
  \left\{\boldsymbol b^{(j)}:1\le j\le d\right\}
  \cup
  \left\{
    \boldsymbol x_{i,\boldsymbol n},\boldsymbol y_{i,\boldsymbol n}:
    0\le i<J,\ \boldsymbol n\in\mathcal A_i
  \right\}.
\]

\begin{figure}[t]
\centering
\begin{tikzpicture}[line cap=round,line join=round,>=Latex]
  \begin{scope}[x=4.2cm,y=4.2cm]
    \draw[black!80,line width=0.65pt] (0,0) rectangle (1,1);
    \node[below left,font=\scriptsize] at (0,0) {$\boldsymbol0$};
    \node[below,font=\scriptsize] at (1,0) {$z_1$};
    \node[left,font=\scriptsize] at (0,1) {$z_2$};
    \draw[gray!75,line width=0.85pt,domain=0.22:1,smooth,variable=\x]
      plot ({\x},{0.22/\x});
    \draw[gray!75,line width=0.85pt,domain=0.34:1,smooth,variable=\x]
      plot ({\x},{0.34/\x});
    \node[black!70,font=\scriptsize] at (0.73,0.19)
      {$z_1z_2=e^{-L_{i+1}}$};
    \node[black!70,font=\scriptsize] at (0.72,0.72)
      {$z_1z_2=e^{-L_i}$};
    \draw[densely dotted,blue!55!black,line width=0.45pt]
      (0,0)--(0.40,1);
    \draw[blue!70!black,line width=0.85pt]
      (0.306,0.765) rectangle (0.36,0.90);
    \fill[blue!70!black] (0.306,0.765) circle[radius=0.009];
    \fill[blue!70!black] (0.36,0.90) circle[radius=0.009];
    \node[blue!70!black,font=\scriptsize,align=center] at (0.17,0.63)
      {homothetic\\bracket};
    \node[font=\small] at (0.5,1.10) {$\boldsymbol z$-space};
  \end{scope}

  \draw[-{Latex[length=2.2mm]},black!70,line width=0.65pt]
    (4.60,2.05)--(5.75,2.05);
  \node[font=\scriptsize] at (5.20,2.36) {$\xi_j=-d\log z_j$};

  \begin{scope}[shift={(6.30,0)},x=1.18cm,y=1.18cm]
    \def\A{1.60}
    \def\B{3.25}
    \def\Smax{4.00}
    \draw[->,black!80,line width=0.65pt] (0,0)--(\Smax,0);
    \draw[->,black!80,line width=0.65pt] (0,0)--(0,\Smax);
    \node[below left,font=\scriptsize] at (0,0) {$\boldsymbol0$};
    \node[below,font=\scriptsize] at (\Smax,0) {$\xi_1$};
    \node[left,font=\scriptsize] at (0,\Smax) {$\xi_2$};

    \fill[gray!13]
      (0,\A)--(0,\B)--(\B,0)--(\A,0)--cycle;
    \foreach \t in {0.70,1.40,2.10,2.80,3.50}{
      \draw[blue!30,line width=0.30pt] (\t,0)--(\t,3.85);
      \draw[blue!30,line width=0.30pt] (0,\t)--(3.85,\t);
    }
    \foreach \a/\b in {
      0/0.70,0.70/0,
      0/1.40,0.70/0.70,1.40/0,
      0/2.10,0.70/1.40,1.40/0.70,2.10/0,
      0/2.80,0.70/2.10,1.40/1.40,2.10/0.70,2.80/0}{
      \fill[blue!22,fill opacity=0.58] (\a,\b) rectangle ++(0.70,0.70);
      \draw[blue!70!black,line width=0.62pt]
        (\a,\b) rectangle ++(0.70,0.70);
    }
    \draw[black!95,line width=0.95pt] (0,\A)--(\A,0);
    \draw[black!95,line width=0.95pt] (0,\B)--(\B,0);
    \node[below,font=\scriptsize] at (\A,0) {$2L_i$};
    \node[below,font=\scriptsize] at (\B,0) {$2L_{i+1}$};
    \draw[<->,black!70,line width=0.55pt]
      (2.80,3.62)--(3.50,3.62);
    \node[above,font=\scriptsize] at (3.15,3.62) {$c_i$};
    \node[font=\small] at (2.00,4.40) {$\boldsymbol\xi$-space};
  \end{scope}
\end{tikzpicture}
\caption{The logarithmic-shell construction in dimension two. In
$\boldsymbol z$-space, the shell is bounded by product-volume level sets and a
homothetic bracket is shown in blue. Under $\xi_j=-d\log z_j$, the shell
becomes the diagonal strip $2L_i\le\xi_1+\xi_2<2L_{i+1}$, which is covered by
full squares of side length $c_i$. The TikZ code for this explanatory figure
was prepared with the assistance of ChatGPT 5.6 Sol (OpenAI).}
\label{fig:homothetic-log-shell}
\end{figure}

\begin{theorem}
\label{thm:hs-finite}
For every $d\in\mathbb N$ and $0<\delta<1$,
$\mathcal B_{d,\delta}^{\rm sh}$ is a $\delta$-bracketing cover and
$\Gamma_{d,\delta}^{\rm sh}$ is a $\delta$-cover.
\end{theorem}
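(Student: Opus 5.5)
Two things need checking: every selected bracket is a $\delta$-bracket, and the brackets cover $[0,1]^d$. The $\delta$-cover claim then follows at once, because $\Gamma^{\rm sh}_{d,\delta}$ is exactly the set of endpoints of the brackets in $\mathcal B^{\rm sh}_{d,\delta}$.

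\emph{Step 1: width of each bracket.} The brackets $[\boldsymbol0,\boldsymbol b^{(j)}]$ have width $V(\boldsymbol b^{(j)})=\delta$. For $\boldsymbol n\in\mathcal A_i$, \eqref{eq:hs-xy} gives $x_{i,\boldsymbol n,j}=e^{-c_i/d}y_{i,\boldsymbol n,j}$. Hence
\[
V(\boldsymbol y)-V(\boldsymbol x)=V(\boldsymbol y)(1-e^{-c_i}),\qquad V(\boldsymbol y)=e^{-c_i|\boldsymbol n|/d}.
\]
Since $|\boldsymbol n|\ge r_i^-\ge d(L_i/c_i-1)$, we get $V(\boldsymbol y)\le e^{-L_i+c_i}$. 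Therefore the width is at most
\[
e^{-L_i}(e^{c_i}-1)=e^{-L_i}\,\delta e^{L_i}=\delta,
\]
by the definition $c_i=\log(1+\delta e^{L_i})$. This is exactly why $c_i$ is chosen as it is. If $r_i^-=0$ is forced by the $\max$, we still have $0\ge d(L_i/c_i-1)$, so the bound $V(\boldsymbol y)\le e^{-L_i+c_i}$ continues to hold.

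\emph{Step 2: coverage.} Take $\boldsymbol z\in[0,1]^d$.
\begin{itemize}
\item If some $z_j\le\delta$, then $\boldsymbol z\in[\boldsymbol0,\boldsymbol b^{(j)}]$.
\item Otherwise all $z_j>\delta$, so $\xi_j=-d\log z_j<d\log(\delta^{-1})$. Also $\boldsymbol z\in(0,1]^d$, so $\xi_j\ge0$.
\end{itemize}
In the second case, $\sum_j\xi_j\ge0=dL_0$. The key observation is that $V(\boldsymbol z)>\delta^d$, so $\sum_j\xi_j<d\cdot d\log(\delta^{-1})=dL_J$. Because $L_0<L_1<\dots<L_J$ (each $c_i>0$), there is a unique $i<J$ with $dL_i\le\sum_j\xi_j<dL_{i+1}$.

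Set $n_j=\lfloor\xi_j/c_i\rfloor\in\mathbb N_0$. Then $c_in_j\le\xi_j\le c_i(n_j+1)$, which translates back to $x_{i,\boldsymbol n,j}\le z_j\le y_{i,\boldsymbol n,j}$. It remains to show $\boldsymbol n\in\mathcal A_i$:
\begin{itemize}
\item \emph{Lower bound.} From $\sum_j n_j>\sum_j(\xi_j/c_i-1)\ge d(L_i/c_i-1)$, together with integrality and $|\boldsymbol n|\ge0$, we get $|\boldsymbol n|\ge r_i^-$.
\item \emph{Upper bound.} From $|\boldsymbol n|\le\sum_j\xi_j/c_i<dL_{i+1}/c_i$ and integrality, we get $|\boldsymbol n|\le\lceil dL_{i+1}/c_i\rceil-1=r_i^+$.
\end{itemize}

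\emph{Main obstacle.} The delicate points are the boundary bookkeeping rather than any estimate. First, the last shell must reach the full range: this is the observation $V(\boldsymbol z)>\delta^d$, which places $\sum_j\xi_j$ below $dL_J$. Second, the strict versus non-strict inequalities in the integrality arguments for $r_i^\pm$ must be handled correctly. Finally, the case $d=1$ needs no separate treatment: the same argument applies verbatim, with $\boldsymbol b^{(1)}=\delta$.
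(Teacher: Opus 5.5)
Your proof is correct and is essentially the paper's argument. The boundary brackets handle points with a coordinate at most $\delta$, the remaining points are placed in a shell and a floor-index cube, and the width bound $e^{-c_i|\boldsymbol n|/d}(1-e^{-c_i})\le e^{-L_i}(e^{c_i}-1)=\delta$ follows from the definitions of $r_i^-$ and $c_i$. You also make explicit the bookkeeping the paper leaves to the construction text: $V(\boldsymbol z)>\delta^d$ places $\sum_j\xi_j$ below $dL_J$, and the integrality checks give $\boldsymbol n\in\mathcal A_i$.
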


\begin{proof}
The boundary brackets $[\boldsymbol0,\boldsymbol b^{(j)}]$ cover every point
having at least one coordinate at most $\delta$. Now let
$\boldsymbol z\in(\delta,1]^d$. Choose its shell $i$ and the index
$\boldsymbol n\in\mathcal A_i$ of the selected cube containing its transformed
point. Then
$\boldsymbol x_{i,\boldsymbol n}<\boldsymbol z\le
\boldsymbol y_{i,\boldsymbol n}$. Its width satisfies
\begin{align*}
  V(\boldsymbol y_{i,\boldsymbol n})
  -V(\boldsymbol x_{i,\boldsymbol n})
  &=
  \exp\left(-\frac{c_i|\boldsymbol n|}{d}\right)
  (1-e^{-c_i})\\
  &\le e^{-L_i+c_i}(1-e^{-c_i})\\
  &=e^{-L_i}(e^{c_i}-1)=\delta,
\end{align*}
where the inequality follows from the definition of $r_i^-$. Thus
$\mathcal B_{d,\delta}^{\rm sh}$ is a $\delta$-bracketing cover. Its
endpoints form the $\delta$-cover $\Gamma_{d,\delta}^{\rm sh}$.
\end{proof}

Consequently, the two constructions satisfy
\begin{align}
  N_{[]}(d,\delta)
  &\le |\mathcal B_{d,\delta}^{\rm sh}|
  \le d+\sum_{i=0}^{J-1}|\mathcal A_i|
  =:B_{[]}^{\rm sh}(d,\delta),
  \label{eq:hs-layer-finite}\\
  N(d,\delta)
  &\le |\Gamma_{d,\delta}^{\rm sh}|
  \le d+\sum_{i=0}^{J-1}
  |\mathcal A_i\cup(\mathcal A_i+\boldsymbol1)|
  =:B_{\rm cov}^{\rm sh}(d,\delta),
  \label{eq:hs-layer-finite-cover}
\end{align}
where $\boldsymbol1=(1,\ldots,1)$. The second sum counts distinct endpoints
within each shell; an endpoint shared by two different shells may be counted
twice. Since translation preserves cardinality,
\[
  B_{[]}^{\rm sh}(d,\delta)
  \le B_{\rm cov}^{\rm sh}(d,\delta)
  \le 2B_{[]}^{\rm sh}(d,\delta).
\]

\subsection{High-dimensional bounds}

The recursion also gives the following useful bound for the number of shells.

\begin{lemma}
\label{lem:hs-J-simple}
The shell number $J$ satisfies
\begin{equation}
  J\le
  1+\frac{\sqrt{d\delta}}{\log(1+\delta)}
  +(1+\sqrt{d\delta})
    \log\frac{d}{\delta\log(1+\delta)}.
  \label{eq:hs-J-simple}
\end{equation}
\end{lemma}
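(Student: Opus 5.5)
The plan is to track the cube side lengths $c_i$ rather than the levels $L_i$, because the bound \eqref{eq:hs-J-simple} has the shape ``reciprocal growth plus logarithmic growth'' of a quantity that starts at $c_0=\log(1+\delta)$. Put $s=\sqrt{d\delta}$ and $f(L)=\log(1+\delta e^{L})$, so that $c_i=f(L_i)$. As long as the cap in \eqref{eq:hs-shell-recursion} is not active, $L_{i+1}=L_i+c_i/s$. The last index $J$ is the only step at which the cap can act, and I will charge it to the leading ``$1+$'' in \eqref{eq:hs-J-simple}. So it suffices to bound the number of uncapped steps $i=0,\dots,J-2$.

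\textbf{Step 1: a growth inequality for $c_i$.} The function $f$ is convex, with $f'(L)=\delta e^L/(1+\delta e^L)=1-e^{-f(L)}$. Hence
\[
  c_{i+1}\ge c_i+\frac{c_i}{s}\bigl(1-e^{-c_i}\bigr)
  \ge c_i\Bigl(1+\frac{c_i}{s(1+c_i)}\Bigr),
\]
using $1-e^{-c}\ge c/(1+c)$. Taking logarithms and using $\log(1+x)\ge x/(1+x)$ gives
\[
  \log c_{i+1}-\log c_i\ge\frac{c_i}{s+(1+s)c_i},
  \qquad\text{equivalently}\qquad
  1\le\Bigl(\frac{s}{c_i}+1+s\Bigr)\bigl(\log c_{i+1}-\log c_i\bigr).
\]

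\textbf{Step 2: summing over the uncapped steps.} I sum the last inequality over $i\le J-2$ and split it into two parts.

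The $(1+s)$ part telescopes to at most $(1+s)\log(c_{\rm end}/c_0)$. Here $c_{\rm end}\le f(d\log\delta^{-1})=\log(1+\delta^{1-d})\le d\log\delta^{-1}+\log2$. This should be at most $d/\delta$, which gives the term $(1+s)\log\bigl(d/(\delta\log(1+\delta))\bigr)$.

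The $s/c_i$ part should give at most $s/c_0=\sqrt{d\delta}/\log(1+\delta)$. The comparison with $\int s\,c^{-2}\,\mathrm dc=s(1/c_0-1/c_{\rm end})$ goes the wrong way for a left Riemann sum, since $s/c$ is decreasing. I will therefore avoid that integral. Instead I will bound $1/c_i-1/c_{i+1}$ directly from Step 1. The inequality $c_{i+1}-c_i\ge c_i^2/(s(1+c_i))$ gives
\[
  \frac{s}{c_i}-\frac{s}{c_{i+1}}\ge\frac{c_i}{(1+c_i)\,c_{i+1}/c_i}.
\]
This is then combined with the $\log$-increment bound to show, for each step, that
\[
  1\le s\Bigl(\frac1{c_i}-\frac1{c_{i+1}}\Bigr)+(1+s)\bigl(\log c_{i+1}-\log c_i\bigr).
\]
The idea is to split each step according to whether $c_{i+1}/c_i$ is at most $2$ or larger. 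When the ratio is moderate, the reciprocal increment is comparable to $(s/c_i)\Delta\log c$. When the ratio is large, $\Delta\log c\ge\log 2$ and the term $(1+s)\Delta\log c$ dominates on its own.

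\textbf{Main obstacle.} Making this per-step inequality hold with exactly the constants in \eqref{eq:hs-J-simple} is the step I expect to be hardest. If the direct case split loses a constant, my first fallback is to sharpen Step 1 using the exact identity $c_{i+1}=\log\bigl(1+(e^{c_i}-1)e^{c_i/s}\bigr)$ instead of the convexity bound. Any remaining slack can be absorbed into the ``$1+$'' and the generous replacement of $d\log\delta^{-1}+\log 2$ by $d/\delta$ inside the logarithm. Summing the per-step inequality, adding the final capped step, and using $c_0=\log(1+\delta)$ then yields \eqref{eq:hs-J-simple}.
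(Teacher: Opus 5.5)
Your overall strategy is viable. If the per-step inequality
\[
  1\le s\Bigl(\frac1{c_i}-\frac1{c_{i+1}}\Bigr)+(1+s)\log\frac{c_{i+1}}{c_i},
  \qquad 0\le i\le J-2,
\]
holds, telescoping gives $J\le1+s/c_0+(1+s)\log(c_{\mathrm{end}}/c_0)$. This is essentially the expression the paper reaches before bounding $f(d\log\delta^{-1})\le d/\delta$. But this inequality carries the whole lemma, your proposal does not prove it, and the route you sketch cannot prove it.

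First, the logged form of Step~1 is intrinsically too weak. With $\Delta=\log(c_{i+1}/c_i)$, the right-hand side is $(s/c_i)(1-e^{-\Delta})+(1+s)\Delta$. At the value $\Delta_0=1/(s/c_i+1+s)$ allowed by your bound, it is strictly below $(s/c_i+1+s)\Delta_0=1$, since $1-e^{-\Delta_0}<\Delta_0$.

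Second, the case split loses constants in both branches. For $c_{i+1}/c_i\le2$, ``comparable'' costs the factor $\min_{1<r\le2}(1-r^{-1})/\log r=1/(2\log2)\approx0.72$. For $c_{i+1}/c_i>2$, one has $(1+s)\log2\ge1$ only if $s\ge1/\log2-1\approx0.44$, whereas $s=\sqrt{d\delta}$ is small whenever $\delta\ll1/d$. A multiplicative loss of this kind hits the dominant term $\sqrt{d\delta}/\log(1+\delta)$. It cannot be absorbed in the additive $1$ or in the slack inside the logarithm.

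Also, your displayed reciprocal bound has a spurious factor $c_i$. Step~1 gives $s/c_i-s/c_{i+1}\ge c_i/\bigl((1+c_i)c_{i+1}\bigr)$, and your version is false for large $c_i$. For $c_i=10$ and $s=100$, one gets $c_{i+1}/c_i\approx1.01$, so the left side is about $0.1$ while your right side is about $0.9$.

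The missing idea is to keep Step~1 unlogged and use monotonicity in the ratio. Put $r=c_{i+1}/c_i$, $b=c_i/(1+c_i)$ and $x=b/s$; Step~1 says exactly $r\ge1+x$. The function $h(r)=(s/c_i)(1-r^{-1})+(1+s)\log r$ is increasing in $r$. Since $(s/c_i)x=1-b$ and $s=b/x$,
\[
  h(r)\ge h(1+x)
  =\frac1{1+x}+\log(1+x)
   +b\Bigl(\frac{\log(1+x)}{x}-\frac1{1+x}\Bigr)\ge1,
\]
by $\log(1+x)\ge x/(1+x)$ used twice. With this inserted, your argument is complete and genuinely different from the paper's.

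The paper avoids any delicate per-step estimate by working in the $L$-variable, where the right-endpoint comparison goes the correct way. Because $L_{i+1}-L_i=c_i/s$ exactly, it has $s\int_{L_i}^{L_{i+1}}\mathrm dL/f(L)\ge c_i/c_{i+1}$, so each step needs only $\log r\ge1-r^{-1}$. The terms $s/c_0$ and $(1+s)\log(\cdot)$ then come from evaluating the integral through $u=f(L)$, $\mathrm dL=\mathrm du/(1-e^{-u})$ and $(1-e^{-u})^{-1}\le1+u^{-1}$. Your version replaces that integral computation by convexity of $f$ and a telescoping sum of $1/c_i$, at the cost of the sharper per-step inequality above.
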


\begin{proof}
Put $\lambda=\log(\delta^{-1})$ and
$c(L)=\log(1+\delta e^L)$, so $c_i=c(L_i)$.
For $0\le i\le J-2$, the last-step truncation does not occur and
$L_{i+1}-L_i=c_i/\sqrt{d\delta}$. Since $c$ is increasing and
$\log r\ge1-r^{-1}$ for $r>0$,
\[
  \sqrt{d\delta}\int_{L_i}^{L_{i+1}}\frac{\mathrm dL}{c(L)}
  +\log\frac{c_{i+1}}{c_i}
  \ge
  \sqrt{d\delta}\int_{L_i}^{L_{i+1}}\frac{\mathrm dL}{c_{i+1}}
  +\log\frac{c_{i+1}}{c_i}
  =
  \frac{c_i}{c_{i+1}}+\log\frac{c_{i+1}}{c_i}
  \ge1.
\]
Summing and adding the final step gives
\[
  J\le
  1+\sqrt{d\delta}\int_0^{d\lambda}\frac{\mathrm dL}{c(L)}
  +\log\frac{c(d\lambda)}{c(0)}.
\]
Since $c'(L)=1-e^{-c(L)}$ and
$(1-e^{-u})^{-1}\le1+u^{-1}$ for $u>0$ by $e^u\ge1+u$,
the substitution $u=c(L)$ gives
\[
\begin{aligned}
  \int_0^{d\lambda}\frac{\mathrm dL}{c(L)}
  &=\int_{c(0)}^{c(d\lambda)}
    \frac{\mathrm du}{u(1-e^{-u})}\\
  &\le\int_{c(0)}^{c(d\lambda)}
    \left(\frac1u+\frac1{u^2}\right)\mathrm du\\
  &=\log\frac{c(d\lambda)}{c(0)}
    +\frac1{c(0)}-\frac1{c(d\lambda)}
  \le\log\frac{c(d\lambda)}{c(0)}+\frac1{c(0)}.
\end{aligned}
\]
Consequently,
\[
  J\le
  1+\frac{\sqrt{d\delta}}{c(0)}
  +(1+\sqrt{d\delta})\log\frac{c(d\lambda)}{c(0)}.
\]
Finally, $c(0)=\log(1+\delta)$ and $\lambda\le\delta^{-1}$ give
\[
  c(d\lambda)
  =\log\left(1+e^{(d-1)\lambda}\right)
  \le\log2+(d-1)\lambda
  \le\frac d\delta.
\]
Substitution proves \eqref{eq:hs-J-simple}.
\end{proof}

Put
\begin{equation}
  A_\delta:=\sup_{L\ge0}\frac{L}{\log(1+\delta e^L)},
  \qquad
  F(t):=\frac{(1+t)^{1+t}}{t^t}\quad(t>0),\qquad F(0):=1.
  \label{eq:hs-A-F}
\end{equation}
Note that $F(t)$ is increasing since
$\frac{\mathrm d}{\mathrm dt}\log F(t)
=\log(1+t^{-1})>0$.

\begin{lemma}
\label{lem:hs-binomial-F}
For every $d\in\mathbb N$ and $m\in\mathbb N_0$,
\begin{equation}
  \binom{m+d}{d}\le F(m/d)^d.
  \label{eq:hs-binomial-F}
\end{equation}
\end{lemma}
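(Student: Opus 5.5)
The bound is the standard entropy estimate for binomial coefficients, obtained from a single term of a binomial expansion. We have
\[
  F(m/d)^d=\frac{(1+m/d)^{d+m}}{(m/d)^m}=\frac{(m+d)^{m+d}}{m^m d^d},
\]
with the convention $0^0=1$. This convention matches $F(0)=1$, so the case $m=0$ reads $\binom{d}{d}=1\le1$ and is trivial. It therefore suffices to prove
\[
  \binom{m+d}{d}\,m^m d^d\le (m+d)^{m+d}
\]
for $m\ge1$.

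To do this, expand $(m+d)^{m+d}=\sum_{k=0}^{m+d}\binom{m+d}{k}m^k d^{m+d-k}$. All terms are nonnegative, so the sum is at least its term with $k=m$, which is $\binom{m+d}{m}m^m d^d=\binom{m+d}{d}m^md^d$. This is exactly the required inequality.

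Probabilistically, this says that the binomial probability $\binom{m+d}{m}p^m(1-p)^d$ with $p=m/(m+d)$ is at most one. There is no real obstacle here. The only points that need care are the algebraic rewriting of $F(m/d)^d$ and the $m=0$ convention, and both are routine.
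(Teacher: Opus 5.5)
Your proof is correct and is essentially the paper's argument. The paper also keeps a single term of a binomial expansion, just normalized as $1=\bigl(\tfrac{d}{m+d}+\tfrac{m}{m+d}\bigr)^{m+d}$, and it treats $m=0$ separately as an equality, exactly as you do.
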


\begin{proof}
For $m=0$, \eqref{eq:hs-binomial-F} is an equality. Let $m\ge1$. By the
binomial theorem,
\[
  1=
  \left(\frac{d}{m+d}+\frac{m}{m+d}\right)^{m+d}
  \ge
  \binom{m+d}{d}
  \left(\frac{d}{m+d}\right)^d
  \left(\frac{m}{m+d}\right)^m.
\]
Therefore
\[
  \binom{m+d}{d}
  \le
  \frac{(m+d)^{m+d}}{d^d m^m}
  =
  F(m/d)^d.
\]
\end{proof}

\begin{lemma}
\label{lem:hs-maximizer}
The supremum defining $A_\delta$ is attained uniquely, and
\begin{equation}
  F(A_\delta-1)=\delta^{-1}.
  \label{eq:hs-F-equation}
\end{equation}
Moreover, for every $h\ge0$,
\begin{equation}
  F(A_\delta+h)\le\delta^{-1}+3+3h.
  \label{eq:hs-F-simple}
\end{equation}
\end{lemma}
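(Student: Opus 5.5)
Set $g(L)=L/\log(1+\delta e^L)$ on $[0,\infty)$. Then $g(0)=0$, $g>0$ for $L>0$, and $g(L)\to1$ as $L\to\infty$, because $\log(1+\delta e^L)=L+\log\delta+o(1)$. For $0<\delta<1$ we have $\log(1+\delta e^L)<L$ once $L>\log(1/(1-\delta))$, so $g$ exceeds $1$ somewhere and the supremum is attained at an interior critical point.

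Write $u=\log(1+\delta e^L)$, so that $\delta e^L=e^u-1$ and $du/dL=1-e^{-u}$. The condition $g'(L)=0$ is $u=L(1-e^{-u})$, that is,
\[
  L=\frac{u}{1-e^{-u}}=\frac{ue^u}{e^u-1}.
\]
At a critical point the value is therefore $A=L/u=e^u/(e^u-1)$, which gives $e^u=A/(A-1)$ and $L=uA$.

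Now insert this into $\delta e^L=e^u-1=1/(A-1)$. We get
\[
  \delta^{-1}=(A-1)e^{L}=(A-1)\left(\frac{A}{A-1}\right)^{A}=\frac{A^A}{(A-1)^{A-1}}=F(A-1).
\]
Since $F$ is increasing, the equation $F(A-1)=\delta^{-1}$ has exactly one solution $A>1$. Every critical point therefore has the same value $A$, and the map $A\mapsto u$ with $e^u=A/(A-1)$ is injective. Because $L=uA$, the critical point is unique as well. This gives unique attainment of the supremum and \eqref{eq:hs-F-equation}.

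For \eqref{eq:hs-F-simple}, put $t=A_\delta-1>0$ and use $\frac{d}{ds}\log F(s)=\log(1+1/s)$. Then
\[
  F(t+1+h)=F(t)\exp\int_t^{t+1+h}\log(1+s^{-1})\,ds.
\]
The plan is to bound this ratio using $\log(1+1/s)\le 1/s$, which gives ratio at most $((t+1+h)/t)$, and also to use the exact identity
\[
  \frac{F(t+1)}{F(t)}=\frac{(t+2)^{t+2}}{(t+1)^{t+1}}\cdot\frac{t^t}{(t+1)^{t+1}}.
\]
A cleaner route is the exact identity
\[
  F(s)=(1+s)\left(1+\frac1s\right)^{s},
\]
which gives $F(s)\le e(1+s)$ and, from $(1+1/s)^s\ge 2$ for $s\ge1$, also $F(s)\ge 2(1+s)$. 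Heuristically $F(s)\approx e(s+\tfrac12)$, so $F(t+1+h)-F(t)\approx e(1+h)\le 3+3h$.

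To make this rigorous I would use convexity or concavity of $F$. Since $F'(s)=F(s)\log(1+1/s)$, the goal is to show $F'(s)\le e$ for all $s>0$. With the identity above,
\[
  F'(s)=(1+s)(1+1/s)^s\log(1+1/s)\le e(1+s)\log(1+1/s).
\]
The crude bound $\log(1+x)\le x$ only gives $e(1+s)/s$, which is too weak for small $s$. Instead I would set $x=1/s$ and prove directly that $\phi(x)=(1+1/x)(1+x)^{1/x}\log(1+x)\le e$ for $x>0$. Taking logarithms, this becomes $\log(1+x)/x+\log(1+1/x)+\log\log(1+x)\le1$. The left side tends to $1$ as $x\to0$ and tends to $-\infty$ as $x\to\infty$. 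I expect the main obstacle to be a monotonicity check of this one-variable function, which comes down to elementary inequalities for $\log(1+x)$.

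Once $F'\le e<3$ on $(0,\infty)$, the mean value theorem gives
\[
  F(A_\delta+h)\le F(A_\delta-1)+e(1+h)\le\delta^{-1}+3+3h.
\]
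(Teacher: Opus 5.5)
Your first part is correct, though it takes a different route from the paper. You locate the maximizer through the critical-point equation $u=L(1-e^{-u})$, after securing attainment from $g(0)=0$, $g\to1$ and $g>1$ somewhere. The paper instead uses a weighted AM--GM inequality giving $L/\log(1+\delta e^L)\le1+t$ for all $L\ge0$, with equality exactly when $\delta e^L=1/t$, so the bound, attainment and uniqueness come out at once.

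The gap is in \eqref{eq:hs-F-simple}. The inequality $F'(s)\le e$ that you set out to prove is false for \emph{every} $s>0$.
Since $F(s)\to1$ as $s\downarrow0$, we have $F'(s)=F(s)\log(1+1/s)\to\infty$. In your variable $x=1/s$, the term $\log\log(1+x)$ tends to $+\infty$, not $-\infty$, as $x\to\infty$. Already at $x=1$ your left-hand side is $2\log2+\log\log2\approx1.0198>1$, that is, $F'(1)=4\log2>e$.
In fact $F$ is concave, so $F'$ decreases to its limit $e$ and hence $F'>e$ everywhere. Consequently even your intermediate inequality $F(A_\delta+h)\le F(A_\delta-1)+e(1+h)$ fails for every $\delta$ and $h$.
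Moreover, $t=A_\delta-1\downarrow0$ as $\delta\uparrow1$ and $F'$ is unbounded near $0$. So no pointwise bound on $F'$ over $[t,t+1+h]$ can work uniformly in $\delta$; an integrated bound is needed.

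The missing idea is to use concavity of $F$ to compare increments rather than to bound slopes. From $F'=F\log(1+1/s)$,
\[
  \frac{F''(s)}{F(s)}=\log^2\Bigl(1+\frac1s\Bigr)-\frac1{s(s+1)}<0,
\]
where the inequality is Cauchy--Schwarz applied to $\log(1+1/s)=\int_s^{s+1}x^{-1}\,\mathrm dx$. For a concave function, the increment over an interval of fixed length decreases as the interval moves right. So with $F(t)=\delta^{-1}$,
\[
  F(t+1+h)-F(t)\le F(1+h)-F(0)=3+\bigl(F(1+h)-F(1)\bigr)\le3+F'(1)h=3+(4\log2)h\le3+3h.
\]
The exact value $F(1)-F(0)=3$ absorbs the singular behaviour of $F'$ near $0$. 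A slope bound, $F'\le F'(1)=4\log2<3$, is used only on $[1,\infty)$.
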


\begin{proof}
Since $F$ is continuous and strictly increasing from $1$ to infinity,
there is a unique $t>0$ with $F(t)=\delta^{-1}$.
The weighted arithmetic--geometric mean inequality gives, for $L\ge0$,
\[
\begin{aligned}
  1+\delta e^L
  &=\frac{t}{1+t}\frac{1+t}{t}
    +\frac1{1+t}(1+t)\delta e^L\\
  &\ge
  \left(\frac{1+t}{t}\right)^{t/(1+t)}
  \left((1+t)\delta e^L\right)^{1/(1+t)}
  =e^{L/(1+t)}.
\end{aligned}
\]
Thus $L/\log(1+\delta e^L)\le1+t$, with equality precisely when
$\delta e^L=1/t$. This occurs at
$L=\log(F(t)/t)>0$, since $F(t)>t$.
Hence $A_\delta=1+t$, the maximum is attained uniquely, and
\eqref{eq:hs-F-equation} follows.

We have
\[
  \frac{F''(t)}{F(t)}
  =\log^2(1+t^{-1})-\frac1{t(t+1)}.
\]
By Cauchy--Schwarz,
\[
  \log^2(1+t^{-1})
  =
  \left(\int_t^{t+1}\frac{\mathrm dx}{x}\right)^2
  <
  \left(\int_t^{t+1}1\,\mathrm dx\right)
  \left(\int_t^{t+1}\frac{\mathrm dx}{x^2}\right)
  =\frac1{t(t+1)}.
\]
Hence $F''(t)<0$, so $F$ is concave. Thus we have
\[
  F(1+h)-F(0)
  = F(1+h) - F(1) + 3
  \le F'(1)h + 3
  = (4 \log 2) h + 3
  \le 3h + 3.
\]
The increment of a concave function over an interval of fixed length decreases
as the interval is shifted to the right. Hence
\[
  F(A_\delta+h)-F(A_\delta-1)
  \le F(1+h)-F(0)\le3+3h.
\]
Together with \eqref{eq:hs-F-equation}, this proves
\eqref{eq:hs-F-simple}.
\end{proof}

\begin{corollary}
\label{cor:hs-simple-finite}
For every $d\in\mathbb N$ and $0<\delta<1$,
\begin{equation}
\begin{aligned}
  B_{[]}^{\rm sh}(d,\delta)
  &\le
  d+
  \left(
    1+\frac{\sqrt{d\delta}}{\log(1+\delta)}
    +(1+\sqrt{d\delta})
      \log\frac{d}{\delta\log(1+\delta)}
  \right)\\
  &\qquad\times
  \delta^{-d}
  \left(1+3\delta+3\sqrt{\frac\delta d}\right)^d.
\end{aligned}
  \label{eq:hs-simple-finite}
\end{equation}
\end{corollary}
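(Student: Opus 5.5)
The plan is to bound every shell count $|\mathcal A_i|$ by the same quantity, and then multiply by the number of shells from Lemma~\ref{lem:hs-J-simple}. By \eqref{eq:hs-layer-finite}, $B_{[]}^{\rm sh}(d,\delta)=d+\sum_{i=0}^{J-1}|\mathcal A_i|$. So it suffices to show
\[
  |\mathcal A_i|\le \delta^{-d}\Bigl(1+3\delta+3\sqrt{\delta/d}\Bigr)^d
  \qquad(0\le i<J),
\]
and then use $\sum_{i<J}|\mathcal A_i|\le J\max_i|\mathcal A_i|$ together with \eqref{eq:hs-J-simple}. Pulling out $\delta^{-d}$, the target per-shell bound is equivalent to
\[
  |\mathcal A_i|\le \bigl(\delta^{-1}+3+3(d\delta)^{-1/2}\bigr)^d .
\]

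First, drop the lower limit in \eqref{eq:hs-A-count}. This gives $|\mathcal A_i|\le\binom{r_i^++d}{d}$, where $r_i^+\ge0$ may be assumed, since otherwise $\mathcal A_i=\emptyset$. Lemma~\ref{lem:hs-binomial-F} with $m=r_i^+$ then gives $|\mathcal A_i|\le F(r_i^+/d)^d$.

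Next, bound the argument $r_i^+/d$ using the recursion \eqref{eq:hs-shell-recursion}:
\[
  \frac{r_i^+}{d}\le\frac{L_{i+1}}{c_i}
  \le\frac{L_i}{c_i}+\frac1{\sqrt{d\delta}}
  \le A_\delta+\frac1{\sqrt{d\delta}}.
\]
The last step uses $c_i=\log(1+\delta e^{L_i})$ and the definition \eqref{eq:hs-A-F} of $A_\delta$ as a supremum over $L\ge0$. Since $F$ is increasing, \eqref{eq:hs-F-simple} with $h=(d\delta)^{-1/2}$ gives
\[
  F(r_i^+/d)\le\delta^{-1}+3+\frac{3}{\sqrt{d\delta}}
  =\delta^{-1}\Bigl(1+3\delta+3\sqrt{\delta/d}\Bigr).
\]
Raising this to the $d$th power gives the per-shell bound.

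Finally, multiply by the bound on $J$ from Lemma~\ref{lem:hs-J-simple} and add the $d$ boundary brackets; this yields \eqref{eq:hs-simple-finite}. There is no real obstacle here. The only point needing care is the middle step: the shell recursion is designed so that $L_{i+1}/c_i$ exceeds $L_i/c_i$ by at most $1/\sqrt{d\delta}$, including at the truncated last step, where the minimum in \eqref{eq:hs-shell-recursion} only makes $L_{i+1}$ smaller.
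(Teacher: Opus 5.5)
Your proposal is correct and follows the paper's proof essentially step for step. You drop the lower limit in \eqref{eq:hs-A-count}, apply Lemma~\ref{lem:hs-binomial-F}, bound $r_i^+/d$ by $A_\delta+(d\delta)^{-1/2}$ via the shell recursion, use monotonicity of $F$ with \eqref{eq:hs-F-simple}, and multiply by the shell-count bound \eqref{eq:hs-J-simple}; your remarks on $r_i^+\ge0$ and the truncated last step are harmless extra care that the paper leaves implicit.
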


\begin{proof}
The definition of $r_i^+$ and the shell recursion give
\[
  \frac{r_i^+}{d}
  <\frac{L_{i+1}}{c_i}
  \le\frac{L_i}{c_i}+\frac1{\sqrt{d\delta}}
  \le A_\delta+\frac1{\sqrt{d\delta}}.
\]
By \eqref{eq:hs-A-count},
Lemma~\ref{lem:hs-binomial-F}, the monotonicity of $F$,
and Lemma~\ref{lem:hs-maximizer}, we obtain
\begin{align}
  |\mathcal A_i|
  \le \binom{r_i^++d}{d}
  &\le  F\left(A_\delta+\frac1{\sqrt{d\delta}}\right)^d \label{eq:hs-shell-F-bound} \\
  &\le \left(\delta^{-1}+3+\frac3{\sqrt{d\delta}}\right)^d =
  \delta^{-d}
  \left(1+3\delta+3\sqrt{\frac\delta d}\right)^d. \notag
\end{align}
Summing over the $J$ shells and using \eqref{eq:hs-layer-finite} and
\eqref{eq:hs-J-simple} proves \eqref{eq:hs-simple-finite}.
\end{proof}

\begin{corollary}
\label{cor:hs-upper-base}
For every fixed $0<\delta<1$,
\begin{equation}
  \limsup_{d\to\infty}B_{[]}^{\rm sh}(d,\delta)^{1/d}
  \le
  \limsup_{d\to\infty}B_{\rm cov}^{\rm sh}(d,\delta)^{1/d}
  \le F(A_\delta).
  \label{eq:hs-upper-base}
\end{equation}
\end{corollary}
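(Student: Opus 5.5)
The first inequality in \eqref{eq:hs-upper-base} is immediate from $B_{[]}^{\rm sh}(d,\delta)\le B_{\rm cov}^{\rm sh}(d,\delta)$, recorded after \eqref{eq:hs-layer-finite-cover}. So the work is in bounding $B_{\rm cov}^{\rm sh}(d,\delta)^{1/d}$.

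Since $B_{\rm cov}^{\rm sh}\le 2B_{[]}^{\rm sh}$ and $2^{1/d}\to1$, it suffices to show $\limsup_{d\to\infty}B_{[]}^{\rm sh}(d,\delta)^{1/d}\le F(A_\delta)$. Corollary~\ref{cor:hs-simple-finite} is too lossy here. It only gives the base $\delta^{-1}+3$, because it uses the crude estimate \eqref{eq:hs-F-simple}. Instead I return to the sharper intermediate bound \eqref{eq:hs-shell-F-bound} in its proof:
\[
  |\mathcal A_i|\le F\bigl(A_\delta+(d\delta)^{-1/2}\bigr)^d
\]
for every $i$. Summing over the shells and using \eqref{eq:hs-layer-finite} gives
\[
  B_{[]}^{\rm sh}(d,\delta)\le d+J\,F\bigl(A_\delta+(d\delta)^{-1/2}\bigr)^d .
\]

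Next I control $J$ for fixed $\delta$ using Lemma~\ref{lem:hs-J-simple}. Its right-hand side is $O_\delta(\sqrt d\log d)$, so it grows only polynomially in $d$. Therefore $J^{1/d}\to1$. Likewise, since $F\ge1$, we have $d+J F(\cdot)^d\le (d+J)F(\cdot)^d$, and $(d+J)^{1/d}\to1$. Hence
\[
  \limsup_{d\to\infty}B_{[]}^{\rm sh}(d,\delta)^{1/d}\le \limsup_{d\to\infty}F\bigl(A_\delta+(d\delta)^{-1/2}\bigr)=F(A_\delta).
\]
The last equality holds because $F$ is continuous (indeed concave and increasing) and $(d\delta)^{-1/2}\to0$. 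Note that $A_\delta$ is finite and independent of $d$ by Lemma~\ref{lem:hs-maximizer}.

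The only point that needs care is that the per-shell bound \eqref{eq:hs-shell-F-bound} holds uniformly in $i$, including the truncated last shell. This holds because $L_{i+1}\le L_i+c_i/\sqrt{d\delta}$ in every case, and because $L_i/c_i\le A_\delta$ by the definition of $A_\delta$. So there is no real obstacle. The main step is simply to avoid \eqref{eq:hs-F-simple} and pass to the limit inside $F$ using continuity.
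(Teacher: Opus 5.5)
Your proposal is correct and follows essentially the same route as the paper. Like the paper, you use the per-shell bound \eqref{eq:hs-shell-F-bound} rather than the cruder \eqref{eq:hs-F-simple}, combine it with $B_{\rm cov}^{\rm sh}\le 2B_{[]}^{\rm sh}$ and with $J^{1/d}\to1$ from Lemma~\ref{lem:hs-J-simple}, and pass to the limit using continuity of $F$; your check that \eqref{eq:hs-shell-F-bound} holds uniformly in $i$, including the truncated final shell, is a valid detail the paper leaves implicit.
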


\begin{proof}
By \eqref{eq:hs-shell-F-bound} and
$B_{\rm cov}^{\rm sh}(d,\delta)\le2B_{[]}^{\rm sh}(d,\delta)$,
\[
  B_{\rm cov}^{\rm sh}(d,\delta)
  \le
  2d+2J F\left(A_\delta+\frac1{\sqrt{d\delta}}\right)^d.
\]
For fixed $\delta$, \eqref{eq:hs-J-simple} gives $J^{1/d}\to1$, and continuity
of $F$ yields the claimed upper bound. The first inequality follows from
$B_{[]}^{\rm sh}\le B_{\rm cov}^{\rm sh}$.
\end{proof}
\begin{remark}
Since $F(A_\delta-1)=\delta^{-1}$ and
$F(t)=e(t+1/2)+O(t^{-1})$ as $t\to\infty$,
\[
  F(A_\delta)=\delta^{-1}+e+O(\delta).
\]
Thus the upper exponential base in Corollary~\ref{cor:hs-upper-base} is
asymptotic to $\delta^{-1}+e$. For comparison, Gnewuch's refinement
\cite[Remark~2.10]{Gnewuch2024} is
\begin{equation}
  N_{[]}(d,\delta)
  \le
  \frac{d^d}{d!}
  \left(
  \frac{\delta^{-1}-1}{f_d(\delta)}
  +\frac12\left(1+\frac3d\right)
  \right)^d,
  \qquad
  f_d(\delta)
  :=\frac{d\bigl(1-(1-\delta)^{1/d}\bigr)}{\delta}.
  \label{eq:hs-gnewuch-refined}
\end{equation}
Its exponential base tends to
$e\delta^{-1}-e+O(\delta)$ as $d\to\infty$ and then $\delta\downarrow0$,
so the ratio of these two upper bases tends to $1/e$.
\end{remark}

\subsection{Fixed-dimensional asymptotics}
\label{sec:hs-fixed-finite}

The cumulative estimate used above deliberately ignores
the lower shell boundary. This is harmless for the high-dimensional
upper bound, but it is too crude for sharp fixed-dimensional
asymptotics. We now retain the actual shell thickness
$r_i^+-r_i^-+1$.

\begin{lemma}
\label{lem:hs-thin-shell}
\label{lem:hs-endpoint-layer}
For every $i$,
\begin{equation}
  |\mathcal A_i|
  \le
  (r_i^+-r_i^-+1)\binom{r_i^++d-1}{d-1},
  \label{eq:hs-thin-shell-count}
\end{equation}
and
\begin{equation}
  0\le
  |\mathcal A_i\cup(\mathcal A_i+\boldsymbol1)|-|\mathcal A_i|
  \le d\binom{r_i^++d-1}{d-1}.
  \label{eq:hs-endpoint-layer}
\end{equation}
\end{lemma}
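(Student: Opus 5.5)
The plan is to count level by level. By \eqref{eq:hs-A-count}, $\mathcal A_i$ is the union over $r_i^-\le r\le r_i^+$ of the sets $S_r:=\{\boldsymbol n\in\mathbb N_0^d:|\boldsymbol n|=r\}$, and $|S_r|=\binom{r+d-1}{d-1}$. Since $\binom{r+d-1}{d-1}$ is nondecreasing in $r$, every term in the sum is at most $\binom{r_i^++d-1}{d-1}$. There are $r_i^+-r_i^-+1$ terms. This gives \eqref{eq:hs-thin-shell-count} directly. If $r_i^+<r_i^-$, the sum is empty, $|\mathcal A_i|=0$, and the inequality still holds; to be safe I will treat the factor as nonnegative or note this case separately.

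For \eqref{eq:hs-endpoint-layer}, the lower bound is trivial because $\mathcal A_i$ is a subset of the union. For the upper bound I will write
\[
|\mathcal A_i\cup(\mathcal A_i+\boldsymbol1)|-|\mathcal A_i|
=|(\mathcal A_i+\boldsymbol1)\setminus\mathcal A_i|.
\]
An element of $\mathcal A_i+\boldsymbol1$ has the form $\boldsymbol m=\boldsymbol n+\boldsymbol1$ with $|\boldsymbol m|=|\boldsymbol n|+d$, where $r_i^-\le|\boldsymbol n|\le r_i^+$. Such an $\boldsymbol m$ lies in $\mathcal A_i$ whenever $|\boldsymbol m|\le r_i^+$, because $|\boldsymbol m|\ge r_i^-+d\ge r_i^-$ automatically. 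Hence the elements outside $\mathcal A_i$ satisfy $r_i^+<|\boldsymbol m|\le r_i^++d$. Equivalently, they come from $\boldsymbol n$ with $r_i^+-d<|\boldsymbol n|\le r_i^+$.

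Translation by $\boldsymbol1$ is injective, so the difference is at most the number of $\boldsymbol n\in\mathcal A_i$ with $|\boldsymbol n|\in\{r_i^+-d+1,\ldots,r_i^+\}$. That is a set of at most $d$ levels $r$, each contributing at most $\binom{r+d-1}{d-1}\le\binom{r_i^++d-1}{d-1}$ by monotonicity. The total is therefore at most $d\binom{r_i^++d-1}{d-1}$.

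Alternatively, one could count the $\boldsymbol m$ directly: they have $|\boldsymbol m|$ in a window of $d$ consecutive values and all coordinates at least $1$. The number with $|\boldsymbol m|=s$ is $\binom{s-1}{d-1}$, and $s-1\le r_i^++d-1$, which gives the same bound.

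The main point requiring care is the interplay of the two boundaries of the shell. Translation pushes indices up by exactly $d$, so only the top $d$ levels of $\mathcal A_i$ can escape the shell, while the lower boundary $r_i^-$ never causes escapes. There is no real obstacle beyond the monotonicity of binomial coefficients in $r$ and the degenerate case of an empty shell.
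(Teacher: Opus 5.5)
Your proof is correct and follows the paper's argument: the first bound is level-by-level counting using that $\binom{r+d-1}{d-1}$ is nondecreasing in $r$, and the second uses that translation by $\boldsymbol1$ raises total degree by exactly $d$, so only the top $d$ levels $r_i^+-d<|\boldsymbol n|\le r_i^+$ can leave the shell. Your empty-shell caveat is unnecessary, since $r_i^+\ge r_i^-$ always holds. Indeed, $L_{i+1}>0$ gives $r_i^+\ge0$. If $r_i^->0$, then $L_{i+1}\ge L_i$ gives $r_i^+-r_i^-=\lceil dL_{i+1}/c_i\rceil-\lceil dL_i/c_i\rceil+d-1\ge d-1$. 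Hence the factor $r_i^+-r_i^-+1$ is always at least $1$.
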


\begin{proof}
The number of indices of total degree $r$ is
$\binom{r+d-1}{d-1}$, which is increasing in $r$. Summing over
$r_i^-\le r\le r_i^+$ gives \eqref{eq:hs-thin-shell-count}.

For the endpoint estimate, the shift by $\boldsymbol1$ can add indices only
on the $d$ layers of total degree $r_i^++1,\ldots,r_i^++d$. On the layer of
degree $r_i^++k$, the added indices arise from indices of $\mathcal A_i$ of
degree $r_i^++k-d\le r_i^+$, so there are at most
$\binom{r_i^++d-1}{d-1}$ of them. Summing over the $d$ possible layers gives
\eqref{eq:hs-endpoint-layer}.
\end{proof}

\begin{corollary}
\label{cor:hs-fixed-expansion}
For every fixed $d\ge2$, as $\delta\downarrow0$,
\begin{equation}
  B_{[]}^{\rm sh}(d,\delta)
  =(1+o_d(1))\delta^{-d},
  \qquad
  B_{\rm cov}^{\rm sh}(d,\delta)
  =(1+o_d(1))\delta^{-d}.
  \label{eq:hs-fixed-expansion}
\end{equation}
Thus the homothetic bracketing construction attains the sharp
fixed-dimensional leading constant, while its shell-wise endpoint count has
the same leading term.
\end{corollary}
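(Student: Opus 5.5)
The lower bound $B_{[]}^{\rm sh}(d,\delta)\ge N_{[]}(d,\delta)\ge\delta^{-d}$ comes from Theorem~\ref{thm:bracketing-lower}. So I only need upper bounds with leading coefficient one, and I will prove them by showing that $\sum_i|\mathcal A_i|$ is a Riemann sum for $\pi_d$-mass at density $\delta^{-d}$. By \eqref{eq:hs-layer-finite-cover} and \eqref{eq:hs-endpoint-layer}, $B_{\rm cov}^{\rm sh}-B_{[]}^{\rm sh}\le d\sum_i\binom{r_i^++d-1}{d-1}$. It therefore suffices to show
\[
  \sum_{i<J}(r_i^+-r_i^-+1)\binom{r_i^++d-1}{d-1}\le(1+o_d(1))\delta^{-d},
  \qquad
  \sum_{i<J}\binom{r_i^++d-1}{d-1}=o_d(\delta^{-d}).
\]
I then use \eqref{eq:hs-thin-shell-count}.

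\textbf{Step 1 (the second sum).} By the estimate in the proof of Corollary~\ref{cor:hs-simple-finite}, $r_i^+\le d A_\delta+\sqrt{d/\delta}$. Since $F(A_\delta-1)=\delta^{-1}$ and $F(t)\sim et$, we get $A_\delta=O(\delta^{-1})$, and so $r_i^+=O_d(\delta^{-1})$. Hence each binomial is $O_d(\delta^{-(d-1)})$. By Lemma~\ref{lem:hs-J-simple}, $J=O_d(\delta^{-1/2}\log\delta^{-1})$, so the sum is $O_d(\delta^{-d+1/2}\log\delta^{-1})=o_d(\delta^{-d})$.

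\textbf{Step 2 (the main sum).} Fix a shell $i$, put $u=\delta e^{L_i}$, and note $c_i=\log(1+u)$. In the region $u\ge\eta$, for a fixed small $\eta>0$, we have $c_i\ge\log(1+\eta)$. Then $r_i^+-r_i^-+1\le d(L_{i+1}-L_i)/c_i+d+1$, so the thickness equals $d(L_{i+1}-L_i)/c_i$ up to an additive $O_d(1)$. Also $r_i^+\le dL_{i+1}/c_i$. The main term is therefore approximately
\[
  \frac{d(L_{i+1}-L_i)}{c_i}\cdot\frac{(dL/c)^{d-1}}{(d-1)!}.
\]
Since $c(L)$ varies slowly across a shell (the relative change is $O(\sqrt{d\delta})$), this is a Riemann sum for
\[
  \int \frac{d^d L^{d-1}}{(d-1)!\,c(L)^d}\,\mathrm dL .
\]
The additive $O_d(1)$ errors contribute $J\cdot O_d(\delta^{-(d-1)})=o_d(\delta^{-d})$. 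I then substitute $L=\log(u/\delta)$, so that $L=\log\delta^{-1}+\log u$. For $u$ in a compact range $[\eta,\eta^{-1}]$, the leading behaviour is $L^{d-1}\approx(\log\delta^{-1})^{d-1}$, and this integral would be of order $\log^{d}$, which is far too small. So the main mass must come from $u$ small, where $c\approx u=\delta e^L$ and the integrand is about $d^dL^{d-1}\delta^{-d}e^{-dL}/(d-1)!$. Integrating over $L\in[0,\infty)$ gives exactly $\delta^{-d}\cdot d^d\int_0^\infty L^{d-1}e^{-dL}\,\mathrm dL/(d-1)!=\delta^{-d}$. This matches $\pi_d$ in logarithmic coordinates, where the $\pi_d$-mass of $\{-\log V\in\mathrm dL\}$ is $d^dL^{d-1}e^{-dL}/(d-1)!\,\mathrm dL$.

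I therefore split the shells by whether $L_i\le\Lambda:=\log(\eta/\delta)$. For $L_i\le\Lambda$ I use $c_i\ge(1-\eta)\delta e^{L_i}$; the errors are then at most a factor $(1-\eta)^{-d}(1+O(\sqrt{d\delta}))$, and the resulting sum is at most $(1+O_d(\eta))\delta^{-d}$. For $L_i>\Lambda$, the factor $c_i^{-d}\le\log(1+\eta)^{-d}$ is bounded, the $L$-range has length $O_d(\log\delta^{-1})$, and $L^{d-1}=O_d(\log^{d-1}\delta^{-1})$. This part of the count is $O_d(\log^{d}\delta^{-1}\cdot\delta^{-1/2}\cdots)$, and in any case polylogarithmic times $J$, which is $o_d(\delta^{-d})$ for $d\ge2$. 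Finally I let $\delta\downarrow0$ and then $\eta\downarrow0$.

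\textbf{Main obstacle.} The hardest step is the Riemann-sum comparison in the small-$u$ regime. There I must bound $(dL_{i+1}/c_i)^{d-1}(L_{i+1}-L_i)/c_i$ by the integral over $[L_i,L_{i+1}]$ up to a factor $1+o(1)$, uniformly over all shells. I would handle it by monotonicity: $L/c(L)$ is increasing on the relevant range below the maximiser $A_\delta$, and $c_{i+1}/c_i\le e^{L_{i+1}-L_i}=e^{O(\sqrt{d\delta}\,c_i/\ldots)}$ is $1+O_d(\sqrt\delta)$ there. This gives a comparison with the integral over the next shell, and summing that comparison telescopes.
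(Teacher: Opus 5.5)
\textbf{Verdict: genuine gap.} Your reduction is sound: the lower bound, the use of \eqref{eq:hs-thin-shell-count} and \eqref{eq:hs-endpoint-layer}, Step~1, and the crude treatment of the shells with $\delta e^{L_i}\ge\eta$ all work. The gap is the Riemann-sum comparison on the shells with $L_i\le\Lambda=\log(\eta/\delta)$.

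There the $L$-mesh is $L_{i+1}-L_i=c_i/\sqrt{d\delta}\approx\sqrt{\delta/d}\,e^{L_i}$. This is small only while $e^{L_i}\ll\sqrt{d/\delta}$; near your cutoff it is about $\eta/\sqrt{d\delta}\to\infty$. Correspondingly, in the small-$u$ regime $c_{i+1}/c_i\approx e^{L_{i+1}-L_i}=e^{c_i/\sqrt{d\delta}}$, which is far from $1$ once $c_i\gtrsim\sqrt{d\delta}$. So neither the claim that ``the relative change of $c$ across a shell is $O(\sqrt{d\delta})$'' nor $c_{i+1}/c_i=1+O_d(\sqrt\delta)$ holds uniformly; both are true only for $L_i=O(1)$. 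On the coarse shells, evaluating $c$ at $L_i$ overestimates $\int_{L_i}^{L_{i+1}}$ by an unbounded factor. Hence the claimed factor $(1-\eta)^{-d}(1+O(\sqrt{d\delta}))$, and with it the bound $(1+O_d(\eta))\delta^{-d}$, is not established. The monotonicity remark does not rescue this. $L/c(L)$ is maximised where $\delta e^L=1/(A_\delta-1)$, that is, near $L\approx1$, so it is decreasing on most of $[0,\Lambda]$.

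What is missing is a separate tail estimate for the intermediate shells, and this needs a sharp count of shells. One fix is to cut at a fixed $M$ instead of at $\log(\eta/\delta)$.
\begin{itemize}
\item For $L_i\le M$ the mesh is at most $e^M\sqrt{\delta/d}$, and your argument goes through.
\item For $L_i>M$, put $x_i=e^{-L_i}$ and $\tau=\sqrt{\delta/d}$. The thickness is at most $\tau^{-1}+d+1$. The bound $c_i\ge\delta/(x_i+\delta)$, together with $L_{i+1}x_i\le L_ix_i+\tau$, gives $\delta r_i^+/d\le x_i\log(x_i^{-1})+o_d(1)\le Me^{-M}+o_d(1)$. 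Hence each such shell contributes $O_d\bigl(\tau\,(Me^{-M}+o_d(1))^{d-1}\bigr)\delta^{-d}$.
\end{itemize}
Summing these contributions requires $J\tau=O_d(1)$. Lemma~\ref{lem:hs-J-simple} does give $J\tau\le1+o_d(1)$. However, the bound $J=O_d(\delta^{-1/2}\log\delta^{-1})$ you quote loses a logarithm, which a fixed cutoff cannot absorb.

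The paper avoids the split entirely by working in $x_i=e^{-L_i}$ from the start. For every shell $0\le x_i-x_{i+1}\le\tau$. Replacing $\tau$ by $x_i-x_{i+1}$ costs $J\tau-(1-\delta^d)=o_d(1)$. The shell sum then becomes a Riemann sum for $\int_0^1(x\log x^{-1})^{d-1}\,\mathrm dx=(d-1)!/d^d$ with mesh at most $\tau$ uniformly.
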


\begin{proof}
Put
\[
  \tau:=\sqrt{\frac\delta d},\qquad x_i:=e^{-L_i}.
\]
Using $1-e^{-u}\le u$ and
$c_i=\log(1+\delta/x_i)\le\delta/x_i$,
\[
\begin{aligned}
  0\le x_i-x_{i+1}
  =x_i\left(1-e^{-(L_{i+1}-L_i)}\right)
  \le x_i\left(1-e^{-c_i/\sqrt{d\delta}}\right)
  \le \frac{x_i c_i}{\sqrt{d\delta}}
  \le \tau.
\end{aligned}
\] Lemma~\ref{lem:hs-J-simple} gives, for fixed $d$,
\[
  J\tau
  \le \tau+\frac{\delta}{\log(1+\delta)}
  +(\tau+\delta)\log\frac{d}{\delta\log(1+\delta)}
  =1+o_d(1).
\]
Hence, by telescoping,
\[
  0
  \le \sum_{i=0}^{J-1}\bigl(\tau-(x_i-x_{i+1})\bigr)
  =J\tau-(x_0-x_J)
  =J\tau-1+\delta^d
  =o_d(1).
\]
Since $\bigl(x\log(x^{-1})\bigr)^{d-1}$ is bounded on $[0,1]$,
\[
  \tau\sum_{i=0}^{J-1}\bigl(x_i\log(x_i^{-1})\bigr)^{d-1}
  \le
  \sum_{i=0}^{J-1}(x_i-x_{i+1})
  \bigl(x_i\log(x_i^{-1})\bigr)^{d-1}
  +o_d(1).
\]
The sum on the right is a Riemann sum with mesh at most $\tau\to0$.
Therefore
\[
  \limsup_{\delta\downarrow0}
  \tau\sum_{i=0}^{J-1}\bigl(x_i\log(x_i^{-1})\bigr)^{d-1}
  \le
  \int_0^1\bigl(x\log(x^{-1})\bigr)^{d-1}\,\mathrm dx
  =\frac{(d-1)!}{d^d}.
\]

The definitions of $r_i^\pm$ give uniformly in $i$
\[
  r_i^+-r_i^-+1\le \tau^{-1}+d+1,
\]
while $r_i^+<dL_{i+1}/c_i$ and
$c_i\ge\delta/(x_i+\delta)$ give
\[
  \frac{\delta(r_i^++d)}d
  \le x_i\log(x_i^{-1})
  +\tau+\delta+d\delta\log(\delta^{-1})
  =x_i\log(x_i^{-1})+o_d(1).
\]
Adding the two inequalities in Lemma~\ref{lem:hs-thin-shell} and using
the preceding bounds, we obtain, uniformly in $i$,
\[
\begin{aligned}
  \delta^d|\mathcal A_i\cup(\mathcal A_i+\boldsymbol1)|
  &\le
  \delta^d(r_i^+-r_i^-+1+d)\binom{r_i^++d-1}{d-1}\\
  &\le
  \frac{\delta^d}{(d-1)!}
  (\tau^{-1}+2d+1)(r_i^++d)^{d-1}\\
  &\le
  \frac{d^d}{(d-1)!}
  \left(\tau+\frac{2d+1}{d}\delta\right)
  \bigl(x_i\log(x_i^{-1})+o_d(1)\bigr)^{d-1}.
\end{aligned}
\]
Since $J\tau=O_d(1)$ and $\delta/\tau\to0$, the uniform
$o_d(1)$ contributes only $o_d(1)$ after summation. The preceding
Riemann-sum estimate therefore gives
\[
  \limsup_{\delta\downarrow0}
  \delta^d B_{\rm cov}^{\rm sh}(d,\delta)\le1.
\]
Finally, $1\le\delta^dN_{[]}(d,\delta)
\le\delta^d B_{[]}^{\rm sh}(d,\delta)
\le\delta^d B_{\rm cov}^{\rm sh}(d,\delta)$,
so both equalities in \eqref{eq:hs-fixed-expansion} follow at once.
\end{proof}

\begin{remark}
The limiting integral is exactly the radial density of the probability
measure $\pi_d$ used in the lower bound. With $x=e^{-L}$,
\[
  \frac{d^d}{(d-1)!}
  \int_0^1\bigl(x\log(x^{-1})\bigr)^{d-1}\,\mathrm dx
  =
  \frac{d^d}{(d-1)!}
  \int_0^\infty L^{d-1}e^{-dL}\,\mathrm dL
  =1.
\]
\end{remark}

\section{Bounds for star discrepancy}
\label{sec:discrepancy}

We now prove Theorem~\ref{thm:discrepancy-23463}. The proof uses the finite
bracketing estimates from Corollary~\ref{cor:simple-bounds} and
Corollary~\ref{cor:hs-simple-finite}, together with Gnewuch's general bound
\eqref{eq:gnewuch2024}. Following Wei\ss{} \cite[proof of Theorem~1]{Weiss2026},
we use Hammersley point sets in dimensions three and four for the large-$n$
range. All $d\ge5$ are handled uniformly.

\subsection{A cover-based chaining estimate}
\label{sec:discrepancy-chaining}

The dyadic chaining arguments used in
\cite{Aistleitner2011,AistleitnerHofer2014,GPW2021} can be abstracted in
terms of the cover cardinalities at the individual levels as follows.

\begin{theorem}
\label{thm:independent-chaining}
Let $d,n,H\ge1$ and $1\le\mu\le H$ be integers. For $\mu\le m\le H$, put
\[
  C_m:=
  \begin{cases}
    \min\{N_{[]}(d,2^{-m}),N(d,2^{-m})\},&m<H,\\
    N(d,2^{-H}),&m=H.
  \end{cases}
\]
Choose positive weights $w_{\mathcal B}$ and $w_h$, $\mu<h\le H$, such
that
\[
  w_{\mathcal B}+\sum_{h=\mu+1}^{H}w_h<1,
\]
and set
\[
  L_{\mathcal B}:=\log\frac{2C_\mu}{w_{\mathcal B}},
  \qquad
  L_h:=\log\frac{2C_h}{w_h}.
\]
Then there exists an $n$-point set $P\subset[0,1)^d$ satisfying
\begin{align}
  nD_n^*(P)
  \le
  \sqrt{\frac n2L_{\mathcal B}}
  +
  \sum_{h=\mu+1}^{H}
  \left(
    \sqrt{2n2^{-(h-1)}L_h}
    +\frac13L_h
  \right)
  +n2^{-H}.
  \label{eq:independent-chaining}
\end{align}
\end{theorem}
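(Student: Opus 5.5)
We would prove Theorem~\ref{thm:independent-chaining} by the probabilistic method. Let $P=\{\boldsymbol p_1,\ldots,\boldsymbol p_n\}$ consist of $n$ independent uniformly distributed points in $[0,1)^d$. We construct a finite chaining scheme whose total number of tested events is controlled by the $C_m$. We then show that with positive probability every tested deviation is below its threshold, and finally deduce \eqref{eq:independent-chaining} for all $\boldsymbol z$ by monotonicity.

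\emph{Chaining scheme.} At the finest level $H$ we fix a $2^{-H}$-cover $\Gamma_H$ with $|\Gamma_H|=N(d,2^{-H})$. For each $\mu\le m<H$ we fix a structure of cardinality $C_m$: either a $2^{-m}$-bracketing cover or a $2^{-m}$-cover $\Gamma_m$, whichever is smaller. We then define point sets $T_H:=\Gamma_H$ and $T_{h-1}\subset[0,1]^d$ recursively, together with parent maps $\varphi_h\colon T_h\to T_{h-1}$, as follows. For $\boldsymbol u\in T_h$, choose a bracket, or a pair $\boldsymbol x\le\boldsymbol y$ from $\Gamma_{h-1}\cup\{\boldsymbol0\}$, at level $h-1$ that contains $\boldsymbol u$. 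Set $\varphi_h(\boldsymbol u)$ equal to its lower endpoint $\boldsymbol x$. Then $\varphi_h(\boldsymbol u)\le\boldsymbol u$ and $V(\boldsymbol u)-V(\varphi_h(\boldsymbol u))\le2^{-(h-1)}$. Since $T_{h-1}$ consists of lower endpoints of at most $C_{h-1}$ structures (plus possibly $\boldsymbol0$, for which $\Delta_P=0$), we have $|T_{h-1}|\le C_{h-1}$, and the number of pairs $(\boldsymbol u,\varphi_h(\boldsymbol u))$ at level $h$ is at most $|T_h|\le C_h$. Iterating the parent maps, every $\boldsymbol u\in\Gamma_H$ is linked to an element of $T_\mu$.

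\emph{Concentration and union bound.} At the coarse level we apply Hoeffding's inequality to $n\Delta_P(\boldsymbol v)$, a centred sum of $n$ independent $[0,1]$-valued variables. This gives $\Pr\bigl(|n\Delta_P(\boldsymbol v)|>\sqrt{nL_{\mathcal B}/2}\bigr)\le2e^{-L_{\mathcal B}}=w_{\mathcal B}/C_\mu$, and a union bound over $T_\mu$ contributes total failure probability at most $w_{\mathcal B}$. For a pair at level $h$, the increment $n\bigl(\Delta_P(\boldsymbol u)-\Delta_P(\varphi_h(\boldsymbol u))\bigr)$ is the centred count of points in $B_{\boldsymbol u}\setminus B_{\varphi_h(\boldsymbol u)}$, whose probability is $p\le2^{-(h-1)}$. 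Bernstein's inequality $\Pr(|S|>s)\le2\exp\bigl(-s^2/(2(np+s/3))\bigr)$ with $s=\sqrt{2n2^{-(h-1)}L_h}+L_h/3$ gives failure probability at most $2e^{-L_h}=w_h/C_h$; checking that this $s$ solves $s^2\ge2L_h(np+s/3)$ is routine. A union bound over the at most $C_h$ pairs gives failure probability at most $w_h$. Since $w_{\mathcal B}+\sum_h w_h<1$, some realization $P$ satisfies all these bounds simultaneously; almost surely $P$ consists of $n$ distinct points in $[0,1)^d$.

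\emph{Conclusion.} For any $\boldsymbol z$, the $2^{-H}$-cover gives $\boldsymbol x\le\boldsymbol z\le\boldsymbol y$ with $\boldsymbol x,\boldsymbol y\in\Gamma_H\cup\{\boldsymbol0\}$ and $V(\boldsymbol y)-V(\boldsymbol x)\le2^{-H}$. Monotonicity yields
\[
  \Delta_P(\boldsymbol x)-2^{-H}\le\Delta_P(\boldsymbol z)\le\Delta_P(\boldsymbol y)+2^{-H}.
\]
Telescoping along the chains of $\boldsymbol x$ and $\boldsymbol y$ bounds $|n\Delta_P(\cdot)|$ at both points by the sum of the coarse term and the level terms, which gives \eqref{eq:independent-chaining}. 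The main point requiring care is the parent construction. It must keep the pair count at level $h$ bounded by $C_h$ rather than by a product $C_hC_{h-1}$. It must also keep every increment one-signed, so that each increment is a single box difference of volume at most $2^{-(h-1)}$; choosing lower endpoints achieves both. One must also handle the possibility that a $\delta$-cover pair uses $\boldsymbol0$, which is harmless since $\Delta_P(\boldsymbol0)=0$.
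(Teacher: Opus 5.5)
Your chaining scheme is the paper's: lower-endpoint parent maps composed downward from the terminal cover, so that each level-$h$ increment is a single box difference determined by its upper point. Together with Hoeffding at level $\mu$, a union bound, and monotonicity against $\Gamma_H$, that part is fine. The gap is in the Bernstein step: the verification you call routine is false. Write $p=2^{-(h-1)}$, $a=\sqrt{2npL_h}$ and $s=a+L_h/3$. Then
\[
  s^2-2L_h\Bigl(np+\frac s3\Bigr)
  =\Bigl(a^2+\frac{2aL_h}{3}+\frac{L_h^2}{9}\Bigr)
   -\Bigl(a^2+\frac{2aL_h}{3}+\frac{2L_h^2}{9}\Bigr)
  =-\frac{L_h^2}{9}<0 .
\]
So the classical tail $2\exp\bigl(-s^2/(2(np+s/3))\bigr)$ at this $s$ is strictly larger than $2e^{-L_h}$. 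Inverting the classical form requires $s\ge L_h/3+\sqrt{L_h^2/9+2npL_h}$, for instance $s=\sqrt{2npL_h}+\frac23L_h$. Your argument therefore proves only a weaker version of \eqref{eq:independent-chaining}, e.g.\ with $\frac23L_h$ in place of $\frac13L_h$, not the theorem as stated.

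To obtain the term $\frac13L_h$ you need Bernstein's inequality in its sub-gamma form, which is what the paper invokes via \cite[Theorem~2.10]{BLM2013}. Suppose $Y_1,\ldots,Y_n$ are independent with $Y_i\le b$ and $\sum_i\mathbb E Y_i^2\le v$, and put $S=\sum_i(Y_i-\mathbb E Y_i)$. Then $\mathbb P\bigl(S\ge\sqrt{2vt}+bt/3\bigr)\le e^{-t}$ for every $t>0$. This follows from the moment-generating-function bound $\log\mathbb E e^{\theta S}\le v\theta^2/(2(1-b\theta/3))$, $0<\theta<3/b$, by optimizing the Chernoff bound exactly. It is strictly sharper than the relaxed tail $\exp(-s^2/(2(v+bs/3)))$ you quoted and cannot be recovered from it. Apply it to $Y_i=\mathbf 1_A(\boldsymbol p_i)$ and to $Y_i=-\mathbf 1_A(\boldsymbol p_i)$. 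Both are bounded above by $1$ with $\sum_i\mathbb E Y_i^2=n\lambda_d(A)\le np$. Adding the two one-sided bounds gives that the centred count exceeds $\sqrt{2npL}+L/3$ in absolute value with probability at most $2e^{-L}$. With this replacement the rest of your proof goes through unchanged.
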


\begin{proof}
For $m=\mu,\ldots,H-1$, choose $\Gamma_m$ as follows. If
$N_{[]}(d,2^{-m})\le N(d,2^{-m})$, let $\Gamma_m$ be the set of lower
endpoints of a minimum $2^{-m}$-bracketing cover. Otherwise let $\Gamma_m$
be a minimum $2^{-m}$-cover. In either case $|\Gamma_m|\le C_m$. Fix a map
\[
  \boldsymbol\ell_m:[0,1]^d\longrightarrow
  \Gamma_m\cup\{\boldsymbol0\}
\]
such that, for every $\boldsymbol y\in[0,1]^d$,
\[
  \boldsymbol\ell_m(\boldsymbol y)\le\boldsymbol y,
  \qquad
  V(\boldsymbol y)-V(\boldsymbol\ell_m(\boldsymbol y))\le2^{-m}.
\]
Let $\Gamma_H$ be a minimum $2^{-H}$-cover, so $|\Gamma_H|=C_H$.

For a measurable $A\subset[0,1)^d$, write
\[
  S(A):=\sum_{i=1}^n
  \bigl(\mathbf1_A(\boldsymbol X_i)-\lambda_d(A)\bigr),
\]
where $\boldsymbol X_1,\ldots,\boldsymbol X_n$ are independent and uniformly
distributed on $[0,1)^d$. For every $\boldsymbol y$, the terminal cover gives
$\boldsymbol v,\boldsymbol w\in\Gamma_H\cup\{\boldsymbol0\}$ with
$\boldsymbol v\le\boldsymbol y\le\boldsymbol w$ and
$V(\boldsymbol w)-V(\boldsymbol v)\le2^{-H}$. Since the contribution at
$\boldsymbol0$ is zero,
\begin{equation}
  |S([\boldsymbol0,\boldsymbol y))|
  \le
  \max_{\boldsymbol z\in\Gamma_H}
  |S([\boldsymbol0,\boldsymbol z))|+n2^{-H}.
  \label{eq:discrepancy-terminal-comparison}
\end{equation}

For $\boldsymbol z\in\Gamma_H$, put
$\boldsymbol\beta_H(\boldsymbol z)=\boldsymbol z$ and recursively define
\[
  \boldsymbol\beta_h(\boldsymbol z)
  =\boldsymbol\ell_h(\boldsymbol\beta_{h+1}(\boldsymbol z)),
  \qquad h=H-1,\ldots,\mu.
\]
Define
$B_\mu(\boldsymbol z)
  :=[\boldsymbol0,\boldsymbol\beta_\mu(\boldsymbol z)),$
and, for $\mu<h\le H$,
\[
  A_h(\boldsymbol z)
  :=[\boldsymbol0,\boldsymbol\beta_h(\boldsymbol z))
  \setminus
  [\boldsymbol0,\boldsymbol\beta_{h-1}(\boldsymbol z)).
\]
Then we have a disjoint union
\[
  [\boldsymbol0,\boldsymbol z)
  =B_\mu(\boldsymbol z)\,\cup\,
  \mathop{\bigcup}_{h=\mu+1}^{H}A_h(\boldsymbol z).
\]
Moreover,
\[
  \lambda_d(A_h(\boldsymbol z))
  =V(\boldsymbol\beta_h(\boldsymbol z))
   -V(\boldsymbol\beta_{h-1}(\boldsymbol z))
  \le2^{-(h-1)}.
\]
Let $\mathcal B_\mu$ and $\mathcal A_h$ be the families of distinct nonempty
sets $B_\mu(\boldsymbol z)$ and $A_h(\boldsymbol z)$, respectively, as
$\boldsymbol z$ ranges over $\Gamma_H$. Since
$\boldsymbol\beta_h(\boldsymbol z)\in\Gamma_h\cup\{\boldsymbol0\}$, and
$A_h(\boldsymbol z)$ is determined by $\boldsymbol\beta_h(\boldsymbol z)$,
we have
\[
  |\mathcal B_\mu|\le C_\mu,
  \qquad
  |\mathcal A_h|\le C_h.
\]
Combining the decomposition with
\eqref{eq:discrepancy-terminal-comparison} gives
\begin{equation}
  nD_n^*
  \le
  \max_{B\in\mathcal B_\mu}|S(B)|
  +\sum_{h=\mu+1}^{H}\max_{A\in\mathcal A_h}|S(A)|
  +n2^{-H}.
  \label{eq:discrepancy-finite-chain}
\end{equation}

Hoeffding's inequality gives, for each $B\in\mathcal B_\mu$,
\[
  \mathbb P\left(
    |S(B)|>\sqrt{\frac n2L_{\mathcal B}}
  \right)
  \le 2e^{-L_{\mathcal B}}
  =\frac{w_{\mathcal B}}{C_\mu}.
\]
For a measurable $A$ with $\lambda_d(A)\le p$, Bernstein's inequality
\cite[Theorem~2.10]{BLM2013} in the form
\[
  \mathbb P\left(
    |S(A)|>\sqrt{2npL}+\frac L3
  \right)
  \le2e^{-L}
\]
holds for every $L>0$. Hence, for $A\in\mathcal A_h$,
\[
  \mathbb P\left(
    |S(A)|>
    \sqrt{2n2^{-(h-1)}L_h}+\frac13L_h
  \right)
  \le 2e^{-L_h}
  =\frac{w_h}{C_h}.
\]
The union bound therefore gives total failure probability strictly smaller
than one. Since the sampled points are pairwise distinct almost surely,
a realization consisting of $n$ distinct points and satisfying
\eqref{eq:independent-chaining} exists.
\end{proof}

When $\mu=H$, the sum is empty. Letting
$w_{\mathcal B}\uparrow1$ gives the single-scale consequence
\begin{equation}
  D^*(n,d)
  \le
  \sqrt{\frac{1}{2n}\log\bigl(2N(d,2^{-H})\bigr)}
  +2^{-H}.
  \label{eq:discrepancy-single-scale}
\end{equation}

\subsection{Proof of Theorem~\ref{thm:discrepancy-23463}}
\label{sec:discrepancy-proof}

\begin{proof}
Throughout the proof, we write each estimate in the form
\[
  D^*(n,d)\le C\sqrt{\frac dn}
\]
and refer to $C$ as the normalized constant.

\medskip
\noindent\textbf{Case $d=1$.}
The $n$-point midpoint grid has star discrepancy $1/(2n)$. Hence
\[
  D^*(n,1)\le\frac1{2n}
  \le0.5\sqrt{\frac1n}.
\]
Thus this case gives normalized constant at most $0.5$.

\medskip
\noindent\textbf{Case $d=2$.}
For $n\ge2$, the base-$2$ Hammersley point set satisfies
\cite[proof of Theorem~1]{Weiss2026}
\[
  D_n^*\le
  \frac{7}{2n}+\frac{\log n}{(2\log 2)n}
  =\frac{7+\log_2 n}{2n}
  =\frac{7+\log_2 n}{2\sqrt{2n}}\sqrt{\frac2n}.
\]
The coefficient $(7+\log_2 n)/(2\sqrt{2n})$ is decreasing for $n\ge2$
by differentiation and equals $2$ at $n=2$. Therefore
\[
  D^*(n,2)\le2\sqrt{\frac2n}
  \qquad(n\ge2).
\]
The same estimate is trivial for $n=1$. Thus this case gives normalized
constant $2$.

\medskip
\noindent\textbf{Case $d=3$.}
For $n\ge256$, use the Hammersley point set obtained from the Halton sequence
in bases $2$ and $3$. By \cite[proof of Theorem~1]{Weiss2026},
\[
  D_n^*\bigl(\mathrm{Ham}_{2,3}(n)\bigr)
  \le
  \frac1n\left[
    3+
    \left(\frac{\log n}{2\log2}+\frac32\right)
    \left(\frac{\log n}{\log3}+2\right)
  \right].
\]
After normalization by $\sqrt{3/n}$, the right-hand side is a positive
linear combination of $n^{-1/2}$, $(\log n)n^{-1/2}$, and
$(\log n)^2n^{-1/2}$, all decreasing for $n\ge e^4$. Hence, since
$256>e^4$, it suffices to evaluate the bound at $n=256$. The normalized
constant in this range is therefore less than
\[
  \frac{3+(11/2)(8\log2/\log3+2)}{16\sqrt3}
  <1.50692.
\]

For $1\le n\le255$, \eqref{eq:discrepancy-single-scale} with $H=4$
and \eqref{eq:exponential-cover} give
\[
  \frac{D^*(n,3)}{\sqrt{3/n}}
  \le
  \sqrt{\frac{13\log2}{6}+\frac{\sqrt3}{4}}
  +\frac1{16}\sqrt{\frac n3}
  \le
  \sqrt{\frac{13\log2}{6}+\frac{\sqrt3}{4}}
  +\frac{\sqrt{85}}{16}
  <1.97.
\]
Combining the two ranges, this case gives normalized constant less than $1.97$.

\medskip
\noindent\textbf{Case $d=4$.}
For $n\ge2^{17}$, use the Hammersley point set in the bases $2,3,5$.
Atanassov's estimate \cite{Atanassov2004}, in the form stated in
\cite[Lemma~5]{Weiss2026}, gives
\[
\begin{aligned}
  D_n^*\bigl(\mathrm{Ham}_{2,3,5}(n)\bigr)
  \le\frac1n\Bigg[&
  \frac16
  \left(\frac{\log n}{\log2}+3\right)
  \left(\frac{\log n}{\log3}+3\right)
  \left(\frac{2\log n}{\log5}+3\right)\\
  &+3\left(\frac{\log n}{\log2}+1\right)
  +\frac52
  \left(\frac{\log n}{\log2}+2\right)
  \left(\frac{\log n}{\log3}+2\right)+3
  \Bigg].
\end{aligned}
\]
After division by $\sqrt{4/n}$, the right-hand side is a positive linear
combination of $n^{-1/2}(\log n)^k$, $0\le k\le3$. These functions are
decreasing when $\log n\ge6$, so it suffices to consider $n=2^{17}$.
Since $\log 2/\log 3 < 2/3$ and $\log 2/\log 5 < 1/2$,
the normalized bound at $n=2^{17}$ is less than
\[
  \frac{1}{512\sqrt2}
  \left[
    \frac16\,20\cdot\frac{43}{3}\cdot20
    +3\cdot18
    +\frac52\,19\cdot\frac{40}{3}+3
  \right]
  =\frac{14813}{4608\sqrt2}
  <2.27309.
\]

For $1\le n\le2^{17}$, \eqref{eq:discrepancy-single-scale} with $H=9$
and \eqref{eq:exponential-cover} give
\[
  \frac{D^*(n,4)}{\sqrt{4/n}}
  \le
  \sqrt{\frac{37\log2}{8}+\frac{\sqrt2}{16}}
  +\frac{\sqrt n}{2^{10}}
  \le
  \sqrt{\frac{37\log2}{8}+\frac{\sqrt2}{16}}
  +2^{-3/2}
  <2.17.
\]
Combining the two ranges, this case gives normalized constant less than $2.27309$.

\medskip
\noindent\textbf{Case $d\ge5$.}
We first isolate the two numerical covering estimates needed below. We claim
\begin{align}
  \frac1{2d}\log\bigl(4N_{[]}(d,2^{-14})\bigr)
  &<5.004,
  \label{eq:discrepancy-single-scale-cardinality} \\
  \frac1{2d}\log\left(
    \frac{2N_{[]}(d,2^{-12})}{0.9}\right)
  &<4.264.
  \label{eq:discrepancy-coarse-cardinality}
\end{align}
We verify both estimates with the same dimension split.
For $5\le d\le64$, \eqref{eq:exponential-bracketing} gives
\[
\begin{aligned}
  N_{[]}(d,2^{-h})
  &\le d+2^{hd}\exp\left(d\sqrt{2d\,2^{-h}}\right)\\
  &\le2^{hd}\exp\left(d\sqrt{2d\,2^{-h}}\right)
    \left(
      1+d\,2^{-5h}\exp\left(-d\sqrt{2d\,2^{-h}}\right)
    \right).
\end{aligned}
\]
Using $\log(1+x)\le x$ and
$\exp(-d\sqrt{2d\,2^{-h}})\le1$, we bound the left-hand sides in
\eqref{eq:discrepancy-single-scale-cardinality} and
\eqref{eq:discrepancy-coarse-cardinality} by
\[
  \frac12\left(
    h\log2+\sqrt{2d\,2^{-h}}
    +2^{-5h}+\frac{\log\kappa}{d}
  \right),
  \qquad
  (h,\kappa)=(14,4),\ (12,2/0.9),
\]
respectively. As a function of $u=\sqrt d$, the varying part has the convex
form $au+c/u^2$ with $a,c>0$, so it suffices to check $d=5$ and $d=64$.
The endpoint pairs are below $(5.004,4.908)$ and $(4.264,4.255)$,
respectively.

For $d\ge65$, \eqref{eq:hs-J-simple} gives $J\le17d$ for
$\delta=2^{-12},2^{-14}$. Indeed, after division by $d$,
\[
  \frac Jd
  \le
  \frac1d+\frac{\sqrt{\delta/d}}{\log(1+\delta)}
  +\left(\frac1d+\sqrt{\frac\delta d}\right)
    \log\frac{d}{\delta\log(1+\delta)}.
\]
For fixed $\delta$, the first two terms decrease with $d$, while the
remaining terms are constant multiples of $\log(Cd)/d$ and
$\log(Cd)/\sqrt d$, with $C=1/(\delta\log(1+\delta))$.
These functions decrease when $\log(Cd)>1$ and $\log(Cd)>2$,
respectively, as is the case here. Thus it suffices to evaluate the
right-hand side at $d=65$, where
\[
  \frac1{65}
  +\frac{\sqrt{\delta/65}}{\log(1+\delta)}
  +\left(\frac1{65}+\sqrt{\frac\delta{65}}\right)
    \log\frac{65}{\delta\log(1+\delta)}
  <
  \begin{cases}
    8.316,&\delta=2^{-12},\\
    16.278,&\delta=2^{-14}.
  \end{cases}
\]
In both cases the bound is less than $17$.
Corollary~\ref{cor:hs-simple-finite} therefore gives, for
$h\in\{12,14\}$,
\[
  N_{[]}(d,2^{-h})
  \le
  18d\,2^{hd}
  \left(1+3\,2^{-h}+3\sqrt{\frac{2^{-h}}d}\right)^d.
\]
Hence the left-hand sides in
\eqref{eq:discrepancy-single-scale-cardinality} and
\eqref{eq:discrepancy-coarse-cardinality} are bounded by
\[
  \frac12\left[
    h\log2
    +\log\left(1+3\,2^{-h}+3\sqrt{\frac{2^{-h}}d}\right)
    +\frac{\log(Kd)}d
  \right]
\]
with $(h,K)=(14,72)$ and $(12,40)$, respectively.
Both bounds decrease with $d$, and at $d=65$ they are below
$4.919$ and $4.223$. This proves
\eqref{eq:discrepancy-single-scale-cardinality} and
\eqref{eq:discrepancy-coarse-cardinality} for every $d\ge5$.

\medskip
\noindent\emph{Subcase $n<2^{21}d$.}
Apply \eqref{eq:discrepancy-single-scale} with $H=14$. Since
$N(d,2^{-14})\le2N_{[]}(d,2^{-14})$ and
$\sqrt{n/d}<2^{21/2}$,
\[
  \frac{D^*(n,d)}{\sqrt{d/n}}
  <
  \sqrt{\frac1{2d}\log\bigl(4N_{[]}(d,2^{-14})\bigr)}
  +2^{-7/2}
  <2.237+2^{-7/2}
  <2.326
\]
since \eqref{eq:discrepancy-single-scale-cardinality} bounds the square-root
term by $\sqrt{5.004}<2.237$.

\medskip
\noindent\emph{Subcase $n\ge2^{21}d$.}
Put $t=n/d$ and take in Theorem~\ref{thm:independent-chaining}
\[
  \mu=12,
  \qquad
  w_{\mathcal B}=0.9,
  \qquad
  w_h=\frac3{100}\left(\frac23\right)^{h-13},
\]
with
\[
  H:=\left\lceil\frac12\log_2t\right\rceil+7.
\]
Then $H\ge18$ and
\[
  0.9+\sum_{h=13}^{\infty}w_h=0.99<1,
  \qquad
  2^{H-8}<\sqrt t\le2^{H-7}.
\]
With $L_{\mathcal B}$ and $L_h$ as in
Theorem~\ref{thm:independent-chaining}, dividing
\eqref{eq:independent-chaining} by $\sqrt{nd}=d\sqrt t$ gives
\begin{align}
  \frac{D^*(n,d)}{\sqrt{d/n}}
  &\le
  \sqrt{\frac{L_{\mathcal B}}{2d}}
  +\sum_{h=13}^{H}\sqrt{\frac{2^{2-h}L_h}{d}}
  +
  \frac1{3d\sqrt t}\sum_{h=13}^{H}L_h
  +\sqrt t\,2^{-H}.
  \label{eq:discrepancy-normalized-chaining}
\end{align}
Since $C_{12}\le N_{[]}(d,2^{-12})$, the first term on the right-hand side
of \eqref{eq:discrepancy-normalized-chaining} satisfies
\[
  \sqrt{\frac{L_{\mathcal B}}{2d}}
  \le
  \sqrt{\frac1{2d}\log\frac{2N_{[]}(d,2^{-12})}{0.9}}
  <\sqrt{4.264}.
\]

To estimate the second term on the right-hand side of
\eqref{eq:discrepancy-normalized-chaining}, set
\[
  \Lambda_h:=
  h\log2+1+
  \frac15\left(
    \log\frac{200}{3}+(h-13)\log\frac32
  \right)
  \qquad(h\ge13).
\]
We use Gnewuch's bound at these levels because it gives an affine majorant
in $h$ and allows a simple estimate of the resulting series.
Indeed, \eqref{eq:gnewuch2024} and
$\log(d!)\ge d\log d-d+1$ give
\[
  \frac1d\log\frac{2N_{[]}(d,2^{-h})}{w_h}
  \le\Lambda_h,
  \qquad
  \frac1d\log\frac{4N_{[]}(d,2^{-h})}{w_h}
  \le\Lambda_h+\frac{\log2}{5}.
\]
The first estimate is used for $13\le h<H$. At $h=H$, use
$C_H=N(d,2^{-H})\le2N_{[]}(d,2^{-H})$ and the second estimate. Hence the
middle two terms on the right-hand side of
\eqref{eq:discrepancy-normalized-chaining} are at most
\begin{align}
  &\sum_{h=13}^{H-1}\sqrt{2^{2-h}\Lambda_h}
  +\sqrt{2^{2-H}\left(\Lambda_H+\frac{\log2}{5}\right)}
  +
  \frac1{3\cdot2^{H-8}}
  \left(
    \sum_{h=13}^{H}\Lambda_h+\frac{\log2}{5}
  \right) \notag\\
  &\le
  \sum_{h=13}^{H}\sqrt{2^{2-h}\Lambda_h}
  +\sqrt{2^{2-H}\frac{\log2}{5}}
  +
  \frac1{3\cdot2^{H-8}}
  \left(
    \sum_{h=13}^{H}\Lambda_h+\frac{\log2}{5}
  \right),
\label{eq:discrepancy-large-increments}
\end{align}
where we used $\sqrt{x+y}\le\sqrt x+\sqrt y$.

Write $\Lambda_h=ah+b$, where
\[
  a=\log2+\frac15\log\frac32,
  \qquad
  b=1+\frac15\left(\log\frac{200}{3}-13\log\frac32\right).
\]
Since $0<a<0.835$ and $0.835\cdot13-\Lambda_{13}>0$,
the affine function $0.835h-\Lambda_h$ is positive for $h\ge13$.
Hence $\Lambda_h<0.835h$.
The last two terms in \eqref{eq:discrepancy-large-increments} are therefore
at most
\[
  2^{-H/2}\left[
    2\sqrt{\frac{\log2}{5}}
    +\frac{256}{3}2^{-H/2}
      \left(0.835\sum_{h=13}^{H}h+\frac{\log2}{5}\right)
  \right].
\]
The bracket decreases for $H\ge18$, since its second term has successive
ratio at most
\[
  2^{-1/2}\left(1+\frac{H+1}{\sum_{h=13}^{H}h}\right)
  \le2^{-1/2}\left(1+\frac{19}{93}\right)<1.
\]
Its value at $H=18$ is less than $13.72$. Thus the sum of the last two
terms in \eqref{eq:discrepancy-large-increments} is less than
$13.72 \cdot 2^{-H/2}$.
On the other hand, $\Lambda_h$ is increasing and
$\Lambda_H\ge H\log2\ge18\log2$, so
\[
\begin{aligned}
  \sum_{h=H+1}^{\infty}\sqrt{2^{2-h}\Lambda_h}
  \ge
  \sqrt{\Lambda_H}\sum_{h=H+1}^{\infty}2^{1-h/2}
  &= \frac{2\sqrt{\Lambda_H}}{\sqrt2-1}\,2^{-H/2}\\
  &\ge
  \frac{2\sqrt{18\log2}}{\sqrt2-1}\,2^{-H/2}\\
  &>17\,2^{-H/2} > 13.72\cdot 2^{-H/2}.
\end{aligned}
\]
Therefore the right-hand side of
\eqref{eq:discrepancy-large-increments} is less than
\[
  \sum_{h=13}^{\infty}\sqrt{2^{2-h}\Lambda_h}
  <
  \frac{\sqrt{0.835\cdot13\,2^{-11}}}
       {1-2^{-1/2}\sqrt{14/13}}
  <0.2735,
\]
since the ratio of two successive terms in the majorant
$\sqrt{0.835h\,2^{2-h}}$ is at most
$2^{-1/2}\sqrt{14/13}$.
Finally, the last term in
\eqref{eq:discrepancy-normalized-chaining} satisfies
$\sqrt t\,2^{-H}\le2^{-7}$. Consequently,
\[
  \frac{D^*(n,d)}{\sqrt{d/n}}
  <\sqrt{4.264}+0.2735+2^{-7}
  <2.3463.
\]
Combining the two subcases, this case gives normalized constant less than $2.3463$.
This proves \eqref{eq:discrepancy-23463}.

Finally, \eqref{eq:inverse-55052} follows by taking
$n=\lceil5.5052d\varepsilon^{-2}\rceil$, since $2.3463^2<5.5052$.
\end{proof}

\section*{Acknowledgments}
The author thanks Michael Gnewuch for many helpful discussions and for generously sharing his expertise on bracketing and $\delta$-covering problems and the related literature.
\section*{Declaration of generative AI and AI-assisted technologies in the manuscript preparation process}
The author used ChatGPT 5.6 Sol for literature searches, exploratory
development, mathematical discussion, and assistance in preparing portions
of the exposition and LaTeX source, including the TikZ code for
Figures~\ref{fig:construction-d2} and~\ref{fig:homothetic-log-shell}.
All mathematical arguments, calculations,
references, and conclusions were independently checked and verified by the
author, who takes full responsibility for the contents of the paper.


\begin{thebibliography}{99}
\footnotesize
\sloppy

\bibitem{Aistleitner2011}
C.~Aistleitner,
\newblock Covering numbers, dyadic chaining and discrepancy,
\newblock \emph{Journal of Complexity} \textbf{27} (2011), 531--540.
\newblock \href{https://doi.org/10.1016/j.jco.2011.03.001}{doi:10.1016/j.jco.2011.03.001}.

\bibitem{AistleitnerHofer2014}
C.~Aistleitner and M.~Hofer,
\newblock Probabilistic discrepancy bound for Monte Carlo point sets,
\newblock \emph{Mathematics of Computation} \textbf{83} (2014), 1373--1381.
\newblock \href{https://doi.org/10.1090/S0025-5718-2013-02773-1}{doi:10.1090/S0025-5718-2013-02773-1}.

\bibitem{Atanassov2004}
E.~I.~Atanassov,
\newblock On the discrepancy of the Halton sequences,
\newblock \emph{Mathematica Balkanica (N.S.)} \textbf{18} (2004), 15--32.

\bibitem{BLM2013}
S.~Boucheron, G.~Lugosi, and P.~Massart,
\newblock \emph{Concentration Inequalities: A Nonasymptotic Theory of Independence},
\newblock Oxford University Press, Oxford, 2013.
\newblock \href{https://doi.org/10.1093/acprof:oso/9780199535255.001.0001}{doi:10.1093/acprof:oso/9780199535255.001.0001}.

\bibitem{DickL1_2026}
J.~Dick,
\newblock The $L_1$-discrepancy with nonnegative weights suffers from the curse of dimensionality,
\newblock preprint, 2026.
\newblock \href{https://arxiv.org/abs/2607.24290}{arXiv:2607.24290}.

\bibitem{DickStar2026}
J.~Dick,
\newblock A Proof of the Novak--Wo\'zniakowski Conjecture: Optimal Polynomial Tractability Exponents for the Inverse Star Discrepancy,
\newblock preprint, 2026.
\newblock \href{https://arxiv.org/abs/2607.23571}{arXiv:2607.23571}.

\bibitem{DickPillichshammer2010}
J.~Dick and F.~Pillichshammer,
\newblock \emph{Digital Nets and Sequences: Discrepancy Theory and Quasi-Monte Carlo Integration},
\newblock Cambridge University Press, Cambridge, 2010.
\newblock \href{https://doi.org/10.1017/CBO9780511761188}{doi:10.1017/CBO9780511761188}.

\bibitem{DickRudolfZhu2016}
J.~Dick, D.~Rudolf, and H.~Zhu,
\newblock Discrepancy bounds for uniformly ergodic Markov chain quasi-Monte Carlo,
\newblock \emph{Annals of Applied Probability} \textbf{26} (2016), 3178--3205.
\newblock \href{https://doi.org/10.1214/16-AAP1173}{doi:10.1214/16-AAP1173}.

\bibitem{DGS2005}
B.~Doerr, M.~Gnewuch, and A.~Srivastav,
\newblock Bounds and constructions for the star-discrepancy via $\delta$-covers,
\newblock \emph{Journal of Complexity} \textbf{21} (2005), 691--709.
\newblock \href{https://doi.org/10.1016/j.jco.2005.05.002}{doi:10.1016/j.jco.2005.05.002}.

\bibitem{DoerrGnewuchWahlstrom2010}
B.~Doerr, M.~Gnewuch, and M.~Wahlstr\"om,
\newblock Algorithmic construction of low-discrepancy point sets via dependent randomized rounding,
\newblock \emph{Journal of Complexity} \textbf{26} (2010), 490--507.
\newblock \href{https://doi.org/10.1016/j.jco.2010.03.004}{doi:10.1016/j.jco.2010.03.004}.

\bibitem{Gnewuch2008JCo}
M.~Gnewuch,
\newblock Bracketing numbers for axis-parallel boxes and applications to geometric discrepancy,
\newblock \emph{Journal of Complexity} \textbf{24} (2008), 154--172.
\newblock \href{https://doi.org/10.1016/j.jco.2007.08.003}{doi:10.1016/j.jco.2007.08.003}.

\bibitem{Gnewuch2008EJC}
M.~Gnewuch,
\newblock Construction of minimal bracketing covers for rectangles,
\newblock \emph{Electronic Journal of Combinatorics} \textbf{15} (2008), Article R95.
\newblock \href{https://doi.org/10.37236/819}{doi:10.37236/819}.

\bibitem{Gnewuch2012Survey}
M.~Gnewuch,
\newblock Entropy, randomization, derandomization, and discrepancy,
\newblock in L.~Plaskota and H.~Wo\'zniakowski (eds.),
\emph{Monte Carlo and Quasi-Monte Carlo Methods 2010},
Springer Proceedings in Mathematics \& Statistics, vol.~23,
Springer, Heidelberg, 2012, pp.~43--78.
\newblock \href{https://doi.org/10.1007/978-3-642-27440-4_3}{doi:10.1007/978-3-642-27440-4\_3}.

\bibitem{GnewuchWahlstromWinzen2012}
M.~Gnewuch, M.~Wahlstr\"om, and C.~Winzen,
\newblock A new randomized algorithm to approximate the star discrepancy based on threshold accepting,
\newblock \emph{SIAM Journal on Numerical Analysis} \textbf{50} (2012), 781--807.
\newblock \href{https://doi.org/10.1137/110833865}{doi:10.1137/110833865}.

\bibitem{Gnewuch2024}
M.~Gnewuch,
\newblock Improved bounds for the bracketing number of orthants or revisiting an algorithm of Thi\'emard to compute bounds for the star discrepancy,
\newblock \emph{Journal of Complexity} \textbf{83} (2024), Article 101855.
\newblock \href{https://doi.org/10.1016/j.jco.2024.101855}{doi:10.1016/j.jco.2024.101855}.

\bibitem{GnewuchHebbinghaus2021}
M.~Gnewuch and N.~Hebbinghaus,
\newblock Discrepancy bounds for a class of negatively dependent random points including Latin hypercube samples,
\newblock \emph{Annals of Applied Probability} \textbf{31} (2021), 1944--1965.
\newblock \href{https://doi.org/10.1214/20-AAP1638}{doi:10.1214/20-AAP1638}.

\bibitem{GPW2021}
M.~Gnewuch, H.~Pasing, and C.~Wei\ss,
\newblock A generalized Faulhaber inequality, improved bracketing covers, and applications to discrepancy,
\newblock \emph{Mathematics of Computation} \textbf{90} (2021), 2873--2898.
\newblock \href{https://doi.org/10.1090/mcom/3666}{doi:10.1090/mcom/3666}.

\bibitem{Hinrichs2004}
A.~Hinrichs,
\newblock Covering numbers, Vapnik--{\v C}ervonenkis classes and bounds for the star-discrepancy,
\newblock \emph{Journal of Complexity} \textbf{20} (2004), 477--483.
\newblock \href{https://doi.org/10.1016/j.jco.2004.01.001}{doi:10.1016/j.jco.2004.01.001}.

\bibitem{HNWW2001}
S.~Heinrich, E.~Novak, G.~W.~Wasilkowski, and H.~Wo\'zniakowski,
\newblock The inverse of the star-discrepancy depends linearly on the dimension,
\newblock \emph{Acta Arithmetica} \textbf{96} (2001), 279--302.
\newblock \href{https://doi.org/10.4064/aa96-3-7}{doi:10.4064/aa96-3-7}.

\bibitem{Niederreiter1992}
H.~Niederreiter,
\newblock \emph{Random Number Generation and Quasi-Monte Carlo Methods},
\newblock CBMS--NSF Regional Conference Series in Applied Mathematics, vol.~63,
Society for Industrial and Applied Mathematics, Philadelphia, PA, 1992.
\newblock \href{https://doi.org/10.1137/1.9781611970081}{doi:10.1137/1.9781611970081}.

\bibitem{NovakPillichshammer2026}
E.~Novak and F.~Pillichshammer,
\newblock Upper bounds for generalized $L_p$-discrepancy of random points,
\newblock \emph{Journal of Complexity} \textbf{95} (2026), Article 102028.
\newblock \href{https://doi.org/10.1016/j.jco.2026.102028}{doi:10.1016/j.jco.2026.102028}.

\bibitem{Thiemard2001}
E.~Thi\'emard,
\newblock An algorithm to compute bounds for the star discrepancy,
\newblock \emph{Journal of Complexity} \textbf{17} (2001), 850--880.
\newblock \href{https://doi.org/10.1006/jcom.2001.0600}{doi:10.1006/jcom.2001.0600}.

\bibitem{Weiss2026}
C.~Wei\ss,
\newblock Hammersley point sets and inverse of star-discrepancy,
\newblock \emph{Journal of Complexity} \textbf{93} (2026), Article 101998.
\newblock \href{https://doi.org/10.1016/j.jco.2025.101998}{doi:10.1016/j.jco.2025.101998}.

\end{thebibliography}
\end{document}